\documentclass[final]{amsart}

\usepackage{booktabs}
\usepackage{fullpage}
\usepackage{tristan}
\usepackage{algorithm}
\usepackage{algpseudocode}
\usepackage{pgfplots}
\usepackage{pgfplotstable}
\usepackage{subcaption}
\usepackage{tikz}
\usetikzlibrary{calc}
\usepackage[giveninits=true,eprint=false,url=false,style=alphabetic]{biblatex}
\author[Calloo, A. et al.]{Ansar Calloo$^{1}$, Matthew Evans $^{2*}$, Fran\c{c}ois Madiot$^{3}$, Tristan Pryer$^{2,4}$}
\address{$^1$Universit\'e Paris-Saclay, CEA, Service de G\'enie Logiciel pour la Simulation, 91191, Gif-sur-Yvette, France}
\address{$^2$Mathematical Sciences, University of Bath}
\address{$^3$Universit\'e Paris-Saclay, CEA, Service d'\'Etudes des R\'eacteurs et de Math\'ematiques Appliqu\'ees, 91191, Gif-sur-Yvette, France}
\address{$^4$Institute of Mathematical Innovation, University of Bath}

\begin{document}

\title{Transport-Matched Penalties for Diffusion Synthetic
  Acceleration of Polytopic Discontinuous Galerkin Discretisations}

\begin{abstract}
  Diffusion synthetic acceleration is most effective when its
  diffusion correction reflects the transport discretisation that
  generates the iteration error. We develop this principle for
  high-order upwind discontinuous Galerkin discretisations of
  discrete-ordinates transport on polytopic meshes. From the discrete
  transport sweep, we derive the exact scalar correction that removes
  the source-iteration scalar error in one step. We prove that the
  associated scalar response is positive and self-adjoint, obtain an
  exact expression for the source-iteration convergence factor, and
  quantify the additional damping produced by vacuum leakage.

  Using the exact correction as a reference, we construct a
  transport-matched modified interior penalty correction whose
  boundary terms are inherited directly from homogeneous vacuum
  inflow. In the optically thick regime, the resulting MIP form
  approximates the exact correction with relative error proportional
  to the effective cell Knudsen number. This gives a strict
  acceleration of source iteration, with bounds uniform in mesh size,
  polynomial degree, and element face count for admissible polytopic
  meshes. Numerical experiments on Cartesian and centroidal Voronoi
  meshes confirm the predicted convergence and correction-operator
  scaling.
\end{abstract}

\maketitle

\section{Introduction}
\label{sec:introduction}

The numerical solution of linear transport equations becomes particularly
challenging in highly scattering, optically thick regimes.  After
discrete-ordinates angular discretisation, source iteration requires
successive directional transport solves coupled through the scalar flux.
In the diffusive regime this iteration becomes increasingly ineffective,
as the dominant error modes become slowly decaying and the convergence
factor approaches unity.

Diffusion synthetic acceleration (DSA) addresses this difficulty by
introducing a diffusion-based correction to the scalar error.  Classical
DSA methods were developed in
\cite{alcouffe1977diffusion,larsen1984diffusion}, and their spectral and
algebraic properties were subsequently analysed in
\cite{ashby1995linear,adams2002fast}.  These works established a central
principle of acceleration methods, the diffusion correction must be
compatible with the discrete transport operator that generates the error.
Stability of the diffusion discretisation alone is not sufficient to
guarantee effective acceleration.

Discontinuous Galerkin (DG) methods provide a natural framework for
transport discretisation, combining local conservation, high-order
approximation, and flexibility on unstructured meshes
\cite{reed1973triangular,lesaint1974finite,wang2009convergence}.  The
general theory of DG methods includes
\cite{cockburn2001runge,cockburn1998local,bassi1997high,
baumann1999discontinuous,riviere2001priori}, while interior penalty
methods for elliptic problems originate from
\cite{babuvska1973finite,baker1977finite,arnold2002unified}.

A substantial body of work has studied the diffusion-limit behaviour
of transport discretisations.  The asymptotic analysis of transport
discretisations was initiated in \cite{larsen1987asymptotic}, with
corresponding results for discontinuous methods in
\cite{borgers1992asymptotic,adams2001thick}.  In particular,
\cite{guermond2010asymptotic} established rigorous conditions under
which upwind discontinuous Galerkin approximations recover the correct
diffusion limit.  These analyses address consistency of the transport
discretisation in the asymptotic regime.  The present work concerns a
different question, the convergence and compatibility of the iterative
solver for the fully discrete transport problem.

DG methods also extend naturally to polygonal and polyhedral meshes.
Relevant developments include polygonal and Voronoi-based discretisations
\cite{wachspress1975rational,ghosh1994voronoi,sukumar2006recent},
polytopic interior penalty methods
\cite{mu2014interior,cangiani2016hp,dong2022robust}, and
agglomeration-based DG methods
\cite{bassi2012flexibility,bassi2014agglomeration}.  Related approaches
include staggered DG methods
\cite{zhao2018staggered}, weak Galerkin methods
\cite{wang2013weak}, virtual element methods
\cite{beirao2013basic}, and multilevel solvers for polytopic interior
penalty discretisations
\cite{antonietti2017multigrid}.  Recent constructions of cycle-free
polytopic transport meshes are given in
\cite{calloo2025cycle}.  A key analytical challenge for such meshes is that
elements may contain many faces with strongly varying sizes.  Estimates
based on individual faces can therefore introduce constants depending on
the local face structure, whereas whole-boundary estimates provide a route
to bounds uniform with respect to the number and relative sizes of faces.

The compatibility of DG transport and diffusion corrections has been
studied extensively.  Discontinuous diffusion corrections were introduced
in \cite{adams1992diffusion}, with compatible schemes for unstructured
meshes and Krylov-accelerated solvers developed in
\cite{warsa2002fully,warsa2004krylov}.  Further developments include
high-order locally refined meshes
\cite{wang2010diffusion}, arbitrary polygonal meshes
\cite{turcksin2014discontinuous}, alternative interior penalty
formulations
\cite{zhou2023revisit}, and heterogeneous media including void regions
\cite{southworth2021heterogeneous}.  The modified interior penalty (MIP)
correction considered here belongs to this framework, combining elliptic
stability requirements with a transport-dependent penalty floor.

The closest analytical result is due to \cite{haut2020diffusion}.
Their analysis establishes compatibility of
interior-penalty-preconditioned scalar corrections for high-order DG
transport on a fixed spatial discretisation, including settings with
curved meshes and cyclic sweep dependencies.  The present work
considers a complementary problem. We identify the exact scalar
correction operator generated by the discrete transport sweep itself
and analyse its approximation by a local MIP correction on admissible
polytopic meshes.  This allows the effect of polynomial degree, mesh
geometry, and physical boundary conditions to be incorporated
explicitly.

The main contributions are as follows.  First, we construct the exact
scalar correction associated with the directional transport solves and
establish its fundamental spectral properties.  This provides a natural
reference operator for analysing diffusion acceleration rather than
assuming a particular diffusion approximation a priori.  Second, we show
how homogeneous vacuum inflow enters this correction through the boundary
treatment of the upwind DG discretisation, and how the resulting boundary
terms are reproduced by the vacuum-matched MIP formulation.  Third, we
prove that the MIP correction is a relative approximation of the exact
correction in the optically thick regime.  Under degree-explicit
whole-boundary trace and inverse estimates, the resulting constants are
uniform with respect to mesh size, polynomial degree, and element face
count.

The present paper is concerned with the analytical structure of the
accelerated iteration and verification of the associated operator
estimates.  A companion computational study
\cite{dsacomp} investigates practical SIP and MIP acceleration over a
broader range of boundary conditions, coefficients, angular quadratures,
meshes, polynomial degrees, and computational costs.  The numerical
experiments presented here therefore focus on the quantities appearing
directly in the analysis, the exact source-iteration factor, the
accelerated contraction estimate, and the relative discrepancy between the
exact and MIP correction operators.

We consider steady monoenergetic transport with isotropic scattering and
constant coefficients on bounded connected polytopal domains in two or
three dimensions.  Homogeneous vacuum inflow is imposed through the zero
exterior trace.  The angular discretisation uses positive centrally paired
quadratures, and the spatial discretisation uses upwind DG methods on
admissible face-to-face polytopic meshes.

The paper is organised as follows.  Section~\ref{sec:transport_pde}
introduces the transport problem and DSA iteration.
Section~\ref{sec:dg_discretisation} defines the DG transport and MIP
forms.  Section~\ref{sec:si_dsa_results} constructs the exact
correction and states the main result.  Section~\ref{sec:proof_si_dsa}
proves the relative correction and contraction estimates.  Finally,
Section~\ref{sec:numerical_verification} gives operator-level
verification of the analytical results.

\section{Transport problem and diffusion correction}
\label{sec:transport_pde}

\subsection{Transport model and diffusive scaling}

Fix a spatial dimension $d\in\{2,3\}$,
and let $\Omega\subset\mathbb R^d$ be a bounded, connected Lipschitz
polytopal domain.  We write $\vec  n$ for the outward unit normal
on $\partial\Omega$.

For $\vec \omega\in\mathbb S^{d-1}$, define the inflow and
outflow boundaries by
\begin{equation}
\label{eq:continuous_inflow_outflow}
\Gamma_\pm(\vec \omega)
:=
\left\{
x\in\partial\Omega:
\pm\vec \omega\cdot\vec  n(x)>0
\right\}.
\end{equation}

For an integrable angular function $v$, define the normalised angular
average
\[
\langle v\rangle
:=
\frac{1}{\lvert\mathbb S^{d-1}\rvert}
\int_{\mathbb S^{d-1}}
v(\vec \omega)\,d\vec \omega.
\]
The normalised spherical moments satisfy
\begin{equation}
\label{eq:continuous_angular_moments}
\langle1\rangle=1,
\qquad
\langle\vec \omega\rangle=\vec 0,
\qquad
\left\langle
\vec \omega\otimes\vec \omega
\right\rangle
=
\frac1dI.
\end{equation}

We consider the steady monoenergetic transport equation
\begin{equation}
\label{eq:continuous_transport}
\begin{aligned}
\vec \omega\cdot\nabla\psi
+\sigma_t^\epsilon\psi
&=
\sigma_s^\epsilon\phi+q
&&\text{in }\Omega,
\\
\psi
&=0
&&\text{on }\Gamma_-(\vec \omega),
\\
\phi
&=\langle\psi\rangle
&&\text{in }\Omega.
\end{aligned}
\end{equation}
Here $\psi$ is the angular flux, $\phi$ is the scalar flux, and $q$ is
an isotropic source.  The boundary condition in
\eqref{eq:continuous_transport} imposes homogeneous vacuum inflow for
each direction $\vec \omega$.

Let $\overline\sigma_t>0$ and $\overline\sigma_a>0$ be constants.  We
use the diffusive scaling
\begin{equation}
\label{eq:continuous_diffusive_scaling}
\sigma_t^\epsilon
=
\frac{\overline\sigma_t}{\epsilon},
\qquad
\sigma_s^\epsilon
=
\frac{\overline\sigma_t}{\epsilon}
-\epsilon\overline\sigma_a,
\qquad
\sigma_a^\epsilon
:=
\sigma_t^\epsilon-\sigma_s^\epsilon
=
\epsilon\overline\sigma_a.
\end{equation}
We assume
\[
0<\epsilon<
\left(
\frac{\overline\sigma_t}{\overline\sigma_a}
\right)^{1/2},
\]
so that $\sigma_s^\epsilon>0$.The scattering ratio is
\begin{equation}
\label{eq:continuous_scattering_ratio}
c_\epsilon
:=
\frac{\sigma_s^\epsilon}{\sigma_t^\epsilon}
=
1-\epsilon^2
\frac{\overline\sigma_a}{\overline\sigma_t}.
\end{equation}

\subsection{Source iteration and diffusion correction}

Given a scalar iterate $\phi^k$, the source-iteration predictor solves
\begin{equation}
\label{eq:continuous_si_predictor}
\begin{aligned}
\vec \omega\cdot\nabla\psi^{k+1/2}
+\sigma_t^\epsilon\psi^{k+1/2}
&=
\sigma_s^\epsilon\phi^k+q
&&\text{in }\Omega,
\\
\psi^{k+1/2}
&=0
&&\text{on }\Gamma_-(\vec \omega),
\\
\phi^{k+1/2}
&=
\left\langle\psi^{k+1/2}\right\rangle
&&\text{in }\Omega.
\end{aligned}
\end{equation}
Let $(\psi^\star,\phi^\star)$ be the transport solution and define
\[
e^k:=\phi^\star-\phi^k,
\qquad
\eta^{k+1/2}:=\psi^\star-\psi^{k+1/2},
\qquad
e^{k+1/2}:=\left\langle\eta^{k+1/2}\right\rangle.
\]
The predictor error satisfies
\[
\vec \omega\cdot\nabla\eta^{k+1/2}
+\sigma_t^\epsilon\eta^{k+1/2}
=
\sigma_s^\epsilon e^k.
\]
Moreover,
\[
\eta^{k+1/2}=0
\qquad
\text{on }\Gamma_-(\vec \omega),
\]
because both the exact solution and the predictor satisfy homogeneous
vacuum inflow.  Introducing
\[
\vec  J^{k+1/2}
:=
\left\langle
\vec \omega\eta^{k+1/2}
\right\rangle,
\qquad
\vec \Pi^{k+1/2}
:=
\left\langle
\vec \omega\otimes\vec \omega
\eta^{k+1/2}
\right\rangle,
\]
the zeroth and first angular moments give
\begin{equation}
\label{eq:continuous_correction_balance}
\nabla\cdot\vec  J^{k+1/2}
+\sigma_a^\epsilon e^{k+1/2}
=
\sigma_s^\epsilon
\qp{e^k-e^{k+1/2}},
\qquad
\sigma_t^\epsilon\vec  J^{k+1/2}
=
-\nabla\cdot\vec \Pi^{k+1/2}.
\end{equation}
In the optically thick regime, the slowly varying error is
approximately isotropic, so
\[
\vec \Pi^{k+1/2}
\approx
\frac1d e^{k+1/2}I,
\qquad
\vec  J^{k+1/2}
\approx
-\epsilon\overline D\nabla e^{k+1/2},
\qquad
\overline D
:=
\frac{1}{d\overline\sigma_t}.
\]
Since
$e^k-e^{k+1/2}=\phi^{k+1/2}-\phi^k$, the moment equations motivate the
interior diffusion correction
\begin{align}
\label{eq:continuous_diffusion_correction}
-\epsilon\overline D\Delta\delta^{k+1}
+\epsilon\overline\sigma_a\delta^{k+1}
&=
\sigma_s^\epsilon
\qp{\phi^{k+1/2}-\phi^k}
\qquad
\text{in }\Omega,
\\
\label{eq:continuous_corrected_update}
\phi^{k+1}
&=
\phi^{k+1/2}+\delta^{k+1}.
\end{align}

For vacuum transport, the moment equations do not by themselves
supply a closed local diffusion boundary condition without an
additional boundary-layer approximation.  We therefore do not impose
a separate Dirichlet or Marshak closure in the analysis.  Instead, the
boundary part of the discrete correction is obtained directly from
the zero exterior trace in the upwind DG transport form.  This gives a
vacuum-matched MIP boundary contribution that can be compared
algebraically with the exact discrete scalar correction.

\section{DG discretisation on polytopic meshes}
\label{sec:dg_discretisation}

\subsection{Mesh, traces, and angular quadrature}

Let $\{\mathcal T_h\}_{h>0}$ be a family of finite, connected,
face-to-face partitions of $\Omega$ into bounded Lipschitz polytopes.
The elements are not assumed to be convex, but the mesh family is
required to satisfy the uniform geometric assumptions stated below.
Denote the sets of interior and physical boundary faces by $\mathcal
E_h^\circ$ and $\mathcal E_h^\partial,$ respectively, and set
$\mathcal E_h := \mathcal E_h^\circ\cup\mathcal E_h^\partial.$

For $K\in\mathcal T_h$, set
\[
h_K:=\operatorname{diam}(K).
\]
For an interior face $e\in\mathcal E_h^\circ$, let $K^+$ and $K^-$
be its adjacent elements and fix a unit normal $\vec n_e$
directed from $K^+$ to $K^-$.  For a boundary face
$e\in\mathcal E_h^\partial$, let $K^+$ be its unique adjacent element
and let $\vec n_e$ be the outward unit normal.

On an interior face, define $\jump{v}:=v^+-v^-,$
$\avg{v}:=\frac12\qp{v^++v^-}.$ On a physical boundary face, we use
the zero exterior trace convention
$v^-:=0.$
Hence
\begin{equation}
\label{eq:vacuum_boundary_jump_average}
\jump{v}=v^+,
\qquad
\avg{v}=\frac12v^+.
\end{equation}
The same convention is used componentwise for vector-valued and
angularly indexed quantities.  In particular,
\[
\avg{\overline D\nabla v\cdot\vec n_e}
=
\frac12
\overline D\nabla v^+\cdot\vec n_e
\qquad
\text{on }e\in\mathcal E_h^\partial.
\]

For every face $e\in\mathcal E_h$, let
\[
\mathcal T_e
:=
\left\{
K\in\mathcal T_h:e\subset\partial K
\right\}.
\]
Thus $\mathcal T_e$ contains two elements on an interior face and one
element on a physical boundary face.

Fix a polynomial degree $p\ge1$ and define
\begin{equation}
\label{eq:dg_space}
V_h
:=
\left\{
v\in L^2(\Omega):
v|_K\in\mathbb P_p(K)
\quad\forall K\in\mathcal T_h
\right\}.
\end{equation}

\begin{assumption}[Admissible mesh family]
\label{ass:admissible_mesh}
There exist constants $C_{\rm inv},C_{\rm tr}>0$,
independent of $K$, $h$, $p$, and the number and relative sizes of the
faces of $K$, such that
\begin{align}
\label{eq:uniform_polynomial_inverse}
\Norm{\nabla v}_{0,K}
&\le
C_{\rm inv}p^2h_K^{-1}\Norm{v}_{0,K},
\\
\label{eq:whole_boundary_trace_inverse}
\Norm{v}_{0,\partial K}^2
&\le
C_{\rm tr}p^2h_K^{-1}\Norm{v}_{0,K}^2
\end{align}
for every $K\in\mathcal T_h$, every $p\ge1$, and every
$v\in\mathbb P_p(K)$.
\end{assumption}

\begin{remark}[Admissible mesh examples]
\label{rem:admissible_mesh_examples}
The assumption is satisfied by the standard shape-regular simplicial
and Cartesian mesh families.  A sufficient geometric condition is that
each element $K$ is star-shaped with respect to a ball
$B(x_K,\rho_K)\subset K$ satisfying
\[
\rho_K\ge\vartheta h_K,
\]
where $\vartheta>0$ is uniform over the mesh family.  Under such a
condition, degree-explicit polynomial inverse and approximation
estimates on polytopic elements follow from
\cite{cangiani2023posteriori,cangiani2022essentially,cangiani2016hp,CangianiGeorgoulisHouston2014}.
Whole-boundary trace estimates with constants independent of the
number and relative sizes of faces follow from the corresponding
geometric covering arguments; see \cite{botti2025trace}.

The essential point is that admissibility is a uniform geometric
property of the elements rather than a restriction to a particular
element topology.  In particular, the condition permits polytopes with
many faces and faces whose diameters are small relative to $h_K$,
provided the element-level covering and star-shapedness constants remain
uniform.  This includes suitable agglomerated meshes and non-convex
elements satisfying these conditions.

The assumption does not include arbitrary polytopic agglomerations.
Although an agglomerated element may have many faces or complicated
geometry, degeneration of the local star-shapedness or covering
properties may destroy the uniform trace and inverse estimates required
for the analysis.

For example, in a Voronoi mesh with separation radius $q$ and covering
radius $R$, the cell generated by $x_i$ satisfies
\[
B(x_i,q/2)\subseteq K_i\subseteq B(x_i,R),
\]
and hence
\[
\frac{r_{K_i}}{h_{K_i}}
\ge
\frac{q}{4R}.
\]
Therefore a uniform bound on $R/q$ gives admissibility of the Voronoi
family.

No quasi-uniformity assumption is imposed at this stage.  However,
strong element degeneration is excluded.  Indeed, taking $v=1$ in
\eqref{eq:whole_boundary_trace_inverse} gives the necessary condition
\[
\frac{h_K|\partial K|}{|K|}
\lesssim1 .
\]
Consequently, families of elements with unbounded aspect ratio are not
covered.
\end{remark}

To account for the degree dependence of the inverse estimates, define
the effective cell Knudsen number by
\begin{equation}
\label{eq:effective_cell_knudsen_number}
\mathrm{Kn}_{h,p}^\epsilon
:=
\max_{K\in\mathcal T_h}
\frac{p^2}{\sigma_t^\epsilon h_K}
=
\max_{K\in\mathcal T_h}
\frac{p^2\epsilon}{\overline\sigma_t h_K}.
\end{equation}
Thus $\mathrm{Kn}_{h,p}^\epsilon\ll1$ means that every cell is
optically thick relative to the polynomial resolution scale.

Let
\[
\left\{
\qp{\vec\omega_m,w_m}
\right\}_{m=1}^{N_\omega}
\subset
\mathbb S^{d-1}\times(0,1)
\]
be a positive angular quadrature satisfying
\begin{equation}
\label{eq:discrete_angular_moments}
\sum_mw_m=1,
\qquad
\sum_mw_m\vec\omega_m=\vec0,
\qquad
\sum_mw_m
\vec\omega_m\otimes\vec\omega_m
=
\frac1dI.
\end{equation}
We also assume central pairing, that is for every $m$, there is an index
$\overline m$ such that
\begin{equation}
\label{eq:central_pairing}
\vec\omega_{\overline m}
=
-\vec\omega_m,
\qquad
w_{\overline m}=w_m.
\end{equation}

For a face $e\in\mathcal E_h$, define the upwind trace in direction
$\vec\omega_m$ by
\[
u_m^{\rm up}
:=
\begin{cases}
u^+,
& \vec\omega_m\cdot\vec n_e\ge0,
\\
u^-,
& \vec\omega_m\cdot\vec n_e<0 .
\end{cases}
\]
On a physical boundary face, this definition uses the interior trace
when $\vec\omega_m\cdot\vec n_e\ge0$ and the zero exterior trace when
$\vec\omega_m\cdot\vec n_e<0$.
Hence the numerical flux imposes homogeneous vacuum inflow.

\subsection{Upwind DG transport form}

For $u,v\in V_h$, define the directional transport form
\begin{equation}
\label{eq:dg_transport_form}
\begin{aligned}
b_{m,h}^\epsilon(u,v)
:={}&
-\sum_{K\in\mathcal T_h}
\qp{u,\vec \omega_m\cdot\nabla v}_K
\\
&+
\sum_{e\in\mathcal E_h}
\left\langle
\qp{\vec \omega_m\cdot\vec  n_e}
u_m^{\rm up},
\jump{v}
\right\rangle_e
+
\qp{\sigma_t^\epsilon u,v}_\Omega.
\end{aligned}
\end{equation}

The angularly averaged upwind dissipation is
\begin{equation}
\label{eq:averaged_jump_form}
j_h(u,v)
:=
\sum_{e\in\mathcal E_h}
\beta_e
\left\langle
\jump{u},\jump{v}
\right\rangle_e,
\end{equation}
where
\begin{equation}
\label{eq:transport_floor}
\beta_e
:=
\frac12
\sum_mw_m
\left|
\vec \omega_m\cdot\vec  n_e
\right|.
\end{equation}

The following identities provide the stability, adjoint symmetry, and
angularly averaged jump dissipation used in the convergence analysis.

\begin{proposition}[Properties of the upwind form]
\label{prop:upwind_properties}
For every $u,v\in V_h$,
\begin{align}
\label{eq:transport_energy_identity}
b_{m,h}^\epsilon(v,v)
&=
\qp{\sigma_t^\epsilon v,v}_\Omega
+
\frac12
\sum_{e\in\mathcal E_h}
\left\langle
\left|
\vec \omega_m\cdot\vec  n_e
\right|
\jump{v},
\jump{v}
\right\rangle_e,
\\
\label{eq:upwind_adjoint_identity}
b_{\overline m,h}^\epsilon(u,v)
&=
b_{m,h}^\epsilon(v,u),
\\
\label{eq:average_transport_identity}
\sum_mw_m b_{m,h}^\epsilon(u,v)
&=
\qp{\sigma_t^\epsilon u,v}_\Omega
+
j_h(u,v).
\end{align}
Moreover,
\begin{equation}
\label{eq:beta_bounds}
\frac{1}{2d}
\le
\beta_e
\le
\frac12.
\end{equation}
In particular, since $\sigma_t^\epsilon>0$, every directional transport
problem is uniquely solvable.
\end{proposition}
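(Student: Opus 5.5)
The plan is to derive all four claims from one elementwise integration-by-parts rewriting of the volume term, followed by face-by-face bookkeeping with the upwind and zero-exterior-trace conventions. For $v\in V_h$, elementwise integration by parts gives
\[
-\sum_K\qp{u,\vec\omega_m\cdot\nabla v}_K
=\sum_K\qp{\vec\omega_m\cdot\nabla u,v}_K
-\sum_K\langle(\vec\omega_m\cdot\vec n_K)u,v\rangle_{\partial K}.
\]
We then regroup the boundary sum over faces. With $\vec n_e$ pointing from $K^+$ to $K^-$, the contribution of a face is $-\langle(\vec\omega_m\cdot\vec n_e),u^+v^+-u^-v^-\rangle_e$. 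On boundary faces the convention $u^-=v^-=0$ makes the same formula valid.

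\textbf{Energy identity.} Take $u=v$ in the original definition and use $\qp{v,\vec\omega_m\cdot\nabla v}_K=\frac12\langle(\vec\omega_m\cdot\vec n_K)v,v\rangle_{\partial K}$. Each face then contributes
\[
(\vec\omega_m\cdot\vec n_e)\Big[-\tfrac12\big((v^+)^2-(v^-)^2\big)+v^{\rm up}(v^+-v^-)\Big].
\]
We split by the sign of $\vec\omega_m\cdot\vec n_e$. If it is $\ge0$, the upwind value is $v^+$ and the bracket equals $\frac12(v^+-v^-)^2$. If it is $<0$, the upwind value is $v^-$ and the bracket equals $-\frac12(v^+-v^-)^2$. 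In both cases the face term is $\frac12|\vec\omega_m\cdot\vec n_e|\jump{v}^2$. Boundary faces need no separate treatment, since the zero exterior trace is built into $v^-$.

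\textbf{Adjoint identity.} Write $b_{\overline m,h}^\epsilon(u,v)$ with $\vec\omega_{\overline m}=-\vec\omega_m$ and integrate by parts as above, so that the derivative falls on $u$ and the volume term becomes $-\qp{\vec\omega_m\cdot\nabla u,v}$ up to sign, which matches the volume term of $b^\epsilon_{m,h}(v,u)$. The face terms then need to be checked against $\langle(\vec\omega_m\cdot\vec n_e)v^{\rm up}_m,\jump{u}\rangle_e$. Reversing the direction swaps the upwind side, so $u^{\rm up}_{\overline m}$ is the \emph{downwind} trace $u^{\rm dn}_m$. The required pointwise identity on each face is
\[
-(u^+v^+-u^-v^-)-u^{\rm dn}(v^+-v^-)=-v^{\rm up}(u^+-u^-)\cdot(-1)\;\text{(sign-adjusted)},
\]
which we verify by cases on the sign of $\vec\omega_m\cdot\vec n_e$. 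For instance, with $\vec\omega_m\cdot\vec n_e>0$ we have $u^{\rm dn}=u^-$ and $v^{\rm up}=v^+$, and $u^+v^+-u^-v^- - u^-(v^+-v^-)=v^+(u^+-u^-)$.

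The main obstacle is tangential faces with $\vec\omega_m\cdot\vec n_e=0$, where the definition of "upwind" is asymmetric. On such faces the whole term carries the factor $\vec\omega_m\cdot\vec n_e=0$, so the asymmetry is harmless. The remaining work is keeping the orientation signs consistent. The $\sigma_t$ terms are symmetric.

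\textbf{Averaged identity.} Pair each $m$ with $\overline m$, which is possible since the weights are equal by \eqref{eq:central_pairing}. First, the volume terms cancel: $\sum_mw_m\vec\omega_m=\vec0$. Second, on each face, $w_m[(\vec\omega_m\cdot\vec n_e)u^{\rm up}_m+(\vec\omega_{\overline m}\cdot\vec n_e)u^{\rm up}_{\overline m}]\jump v$ reduces by cases to $w_m|\vec\omega_m\cdot\vec n_e|\,\jump u\jump v$. Summing over pairs, where each pair is counted twice in $\sum_m$, gives $\beta_e\jump u\jump v$ with the factor $\frac12$.

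\textbf{Bounds on $\beta_e$.} Since $|\vec\omega_m\cdot\vec n_e|\le1$ and $\sum_m w_m=1$, we get $\beta_e\le\frac12$. Since $|t|\ge t^2$ for $|t|\le1$, we get $\sum_mw_m|\vec\omega_m\cdot\vec n_e|\ge \vec n_e^{\top}\big(\tfrac1dI\big)\vec n_e=\tfrac1d$, and hence $\beta_e\ge\frac1{2d}$.

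\textbf{Unique solvability.} By the energy identity, $b^\epsilon_{m,h}(v,v)\ge\sigma^\epsilon_t\|v\|_{0,\Omega}^2$. Hence the square linear system on the finite-dimensional space $V_h$ is injective, and therefore uniquely solvable.
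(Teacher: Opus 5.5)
Your proof is correct, but it is organised differently from the paper's. The paper rewrites the upwind flux once, as $(\vec\omega_m\cdot\vec n_e)u_m^{\rm up}=(\vec\omega_m\cdot\vec n_e)\avg{u}+\frac12|\vec\omega_m\cdot\vec n_e|\jump{u}$. This holds on boundary faces by the zero exterior trace and trivially on tangential faces. It splits $b_{m,h}^\epsilon$ into a central part, a symmetric jump dissipation, and the collision term. All three identities then follow from properties of the central part alone:
\begin{itemize}
\item elementwise integration by parts shows it vanishes at $u=v$;
\item it changes sign under $\vec\omega_m\to-\vec\omega_m$, which combined with integration by parts gives the adjoint identity;
\item it cancels under the centrally paired angular average.
\end{itemize}
You instead verify each identity separately, face by face, by cases on the sign of $\vec\omega_m\cdot\vec n_e$, using upwind and downwind traces. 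Your route is fully elementary and makes the role of the zero exterior trace concrete. However, it needs a separate case check for each identity and an explicit remark about tangential faces. The paper's route is shorter and exposes the skew/symmetric structure that is reused later, e.g.\ in $\mathcal B^*=\mathcal R\mathcal B\mathcal R$ and in the lifting identity. Your treatment of the $\beta_e$ bounds and of unique solvability matches the paper's.

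One slip to fix is in the adjoint step. Integrating by parts shows that the face contribution of $b_{\overline m,h}^\epsilon(u,v)$ is $(\vec\omega_m\cdot\vec n_e)\big[(u^+v^+-u^-v^-)-u^{\rm dn}(v^+-v^-)\big]$. The pointwise identity to verify is therefore $(u^+v^+-u^-v^-)-u^{\rm dn}(v^+-v^-)=v^{\rm up}(u^+-u^-)$. Your displayed identity, with a leading minus on the first term, is false as written; check it for $\vec\omega_m\cdot\vec n_e>0$. The case computation you actually carry out is the correct one, and the case $\vec\omega_m\cdot\vec n_e<0$ works the same way.
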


The proof is given in Section~\ref{sec:proof_dg_properties}.

\subsection{MIP diffusion form}

Fix $\vartheta\in(0,1)$ and define
\begin{equation}
\label{eq:sip_penalty_no_face_count}
\tau_{{\rm SIP},e}
:=
\frac{C_{\rm tr}p^2}{2\vartheta^2}
\max_{K\in\mathcal T_e}
\frac{\overline D}{h_K}
\end{equation}
The whole-boundary trace estimate implies
\begin{equation}
\label{eq:global_flux_trace_no_face_count}
\sum_{e\in\mathcal E_h}
\tau_{{\rm SIP},e}^{-1}
\Norm{
\avg{
\overline D\nabla v\cdot\vec  n_e
}
}_{0,e}^2
\le
\vartheta^2\overline D
\sum_{K\in\mathcal T_h}
\Norm{\nabla v}_{0,K}^2.
\end{equation}

The MIP penalty is
\begin{equation}
\label{eq:mip_penalty}
\tau_e^\epsilon
:=
\max\left\{
\epsilon\tau_{{\rm SIP},e},
\beta_e
\right\}.
\end{equation}
The first entry in the maximum is the usual SIP penalty for the
physical diffusion coefficient $\epsilon\overline D$ and guarantees
coercivity.  The second entry is the angularly averaged upwind jump
dissipation and provides transport matching.

Define the MIP bilinear form
\begin{equation}
  \label{eq:mip_form}
  \begin{split}
    a_{{\rm MIP},h}^\epsilon(u,v)
    :={}&
    \epsilon
    \sum_{K\in\mathcal T_h}
    \left[
      \qp{\overline D\nabla u,\nabla v}_K
      +
      \qp{\overline\sigma_a u,v}_K
      \right]
    +
    \sum_{e\in\mathcal E_h}
    \tau_e^\epsilon
    \left\langle
    \jump{u},\jump{v}
    \right\rangle_e
    \\  
    &\qquad -
    \epsilon
    \sum_{e\in\mathcal E_h}
    \left\langle
    \avg{
      \overline D\nabla u\cdot\vec  n_e
    },
    \jump{v}
    \right\rangle_e
    -
    \epsilon
    \sum_{e\in\mathcal E_h}
    \left\langle
    \avg{
      \overline D\nabla v\cdot\vec  n_e
    },
    \jump{u}
    \right\rangle_e
  \end{split}
\end{equation}
On a physical boundary face, the contribution of
\eqref{eq:mip_form} is
\begin{align}
\label{eq:vacuum_mip_boundary_contribution}
\sum_{e\in\mathcal E_h^\partial}
\bigg[
&
\tau_e^\epsilon
\left\langle u,v\right\rangle_e
-
\frac{\epsilon}{2}
\left\langle
\overline D\nabla u\cdot\vec  n_e,
v
\right\rangle_e
\nonumber\\
&
-
\frac{\epsilon}{2}
\left\langle
\overline D\nabla v\cdot\vec  n_e,
u
\right\rangle_e
\bigg].
\end{align}
The factors $1/2$ are inherited from the zero exterior average in
\eqref{eq:vacuum_boundary_jump_average}.  They are required for
second-order matching with the exact vacuum transport correction.

This is a transport-matched vacuum boundary form.  It is not the
standard full-flux Nitsche form for a prescribed Dirichlet diffusion
problem, nor is it a Marshak boundary condition.  The comparison
theorem below concerns precisely the form
\eqref{eq:vacuum_mip_boundary_contribution}.

\begin{proposition}[Properties of the MIP form]
\label{prop:mip_properties}
The following statements hold.

\begin{enumerate}
\item
For the homogeneous-vacuum formulation above, every $v\in V_h$
satisfies
\begin{align}
\label{eq:mip_coercivity}
a_{{\rm MIP},h}^\epsilon(v,v)
\ge{}&
(1-\vartheta)
\left[
\epsilon\overline D
\sum_K\Norm{\nabla v}_{0,K}^2
+
\sum_e\tau_e^\epsilon
\Norm{\jump{v}}_{0,e}^2
\right]
\nonumber\\
&+
\epsilon\overline\sigma_a
\Norm{v}_{0,\Omega}^2.
\end{align}
In particular,
$a_{{\rm MIP},h}^\epsilon$ is symmetric positive definite.

\item
If
\begin{equation}
\label{eq:floor_threshold}
\mathrm{Kn}_{h,p}^\epsilon
\le
\mathrm{Kn}_{\rm floor}
:=
\frac{\vartheta^2}{C_{\rm tr}},
\end{equation}
then the transport floor is active on every interior and physical
boundary face:
\begin{equation}
\label{eq:active_transport_floor}
\tau_e^\epsilon=\beta_e
\qquad
\forall e\in\mathcal E_h.
\end{equation}

\end{enumerate}
\end{proposition}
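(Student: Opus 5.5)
For part (1), expand $a_{{\rm MIP},h}^\epsilon(v,v)$ using \eqref{eq:mip_form}:
\[
a_{{\rm MIP},h}^\epsilon(v,v)
=
\epsilon\overline D\sum_K\Norm{\nabla v}_{0,K}^2
+\epsilon\overline\sigma_a\Norm{v}_{0,\Omega}^2
+\sum_e\tau_e^\epsilon\Norm{\jump{v}}_{0,e}^2
-2\epsilon\sum_e\left\langle\avg{\overline D\nabla v\cdot\vec n_e},\jump{v}\right\rangle_e .
\]
The boundary faces need no separate treatment. The zero exterior trace convention \eqref{eq:vacuum_boundary_jump_average} makes the boundary terms exactly of the same jump/average shape, as recorded in \eqref{eq:vacuum_mip_boundary_contribution}.

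The consistency term is bounded face by face with Cauchy--Schwarz, weighted by $\tau_{{\rm SIP},e}$:
\[
2\epsilon\left|\left\langle\avg{\overline D\nabla v\cdot\vec n_e},\jump{v}\right\rangle_e\right|
\le
\frac{\epsilon}{\vartheta}\tau_{{\rm SIP},e}^{-1}\Norm{\avg{\overline D\nabla v\cdot\vec n_e}}_{0,e}^2
+\vartheta\epsilon\tau_{{\rm SIP},e}\Norm{\jump{v}}_{0,e}^2 .
\]
Summing over faces and applying \eqref{eq:global_flux_trace_no_face_count}, the first contribution is at most $\vartheta\epsilon\overline D\sum_K\Norm{\nabla v}_{0,K}^2$. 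For the second, use $\epsilon\tau_{{\rm SIP},e}\le\tau_e^\epsilon$ from \eqref{eq:mip_penalty}, so it is at most $\vartheta\sum_e\tau_e^\epsilon\Norm{\jump{v}}_{0,e}^2$. Subtracting these from the expansion gives \eqref{eq:mip_coercivity}.

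Symmetry is immediate from the form. Positive definiteness follows because $\epsilon\overline\sigma_a>0$, so $a_{{\rm MIP},h}^\epsilon(v,v)=0$ forces $v=0$.

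The one step to check carefully is the derivation of \eqref{eq:global_flux_trace_no_face_count} from the whole-boundary trace estimate, because it must not pick up a face-count constant. I would argue element-wise as follows.

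\begin{itemize}
\item Use $(a+b)^2/4\le(a^2+b^2)/2$ to get $\Norm{\avg{w}}_{0,e}^2\le\frac12\sum_{K\in\mathcal T_e}\Norm{w|_K}_{0,e}^2$. On boundary faces $\avg{w}=\frac12w^+$, which gives an even smaller bound.
\item The choice of $\tau_{{\rm SIP},e}$ as a maximum over $\mathcal T_e$ gives $\tau_{{\rm SIP},e}^{-1}\le 2\vartheta^2h_K/(C_{\rm tr}p^2\overline D)$ for each $K\in\mathcal T_e$.
\item Regroup the face sum by elements. For each $K$, the faces of $K$ are disjoint pieces of $\partial K$, so their contributions sum to at most $\overline D^2\Norm{\nabla v}_{0,\partial K}^2$.
\item Apply \eqref{eq:whole_boundary_trace_inverse} to each component of $\nabla v|_K\in\mathbb P_{p-1}(K)^d\subset\mathbb P_p(K)^d$. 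This yields $\vartheta^2\overline D\Norm{\nabla v}_{0,K}^2$ per element, with no dependence on the number of faces.
\end{itemize}

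For part (2), it suffices to show $\epsilon\tau_{{\rm SIP},e}\le\beta_e$ for every face. By \eqref{eq:beta_bounds}, $\beta_e\ge 1/(2d)$. Also,
\[
\epsilon\tau_{{\rm SIP},e}
=\frac{C_{\rm tr}}{2\vartheta^2}\max_{K\in\mathcal T_e}\frac{p^2\epsilon}{d\,\overline\sigma_t h_K}
\le\frac{C_{\rm tr}}{2d\vartheta^2}\mathrm{Kn}_{h,p}^\epsilon ,
\]
using $\overline D=1/(d\overline\sigma_t)$ and \eqref{eq:effective_cell_knudsen_number}. Under \eqref{eq:floor_threshold} this is at most $1/(2d)\le\beta_e$, so the maximum in \eqref{eq:mip_penalty} equals $\beta_e$.

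The argument is identical on interior and boundary faces, because $\mathcal T_e$ is simply a singleton on a boundary face.
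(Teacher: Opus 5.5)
Your proposal is correct and follows the paper's proof. The paper uses the same three ingredients: regrouping by elements plus the componentwise whole-boundary trace estimate to get \eqref{eq:global_flux_trace_no_face_count}; a $\tau_{{\rm SIP},e}$-weighted Cauchy--Schwarz/Young bound with $\epsilon\tau_{{\rm SIP},e}\le\tau_e^\epsilon$ for coercivity; and $\epsilon\tau_{{\rm SIP},e}\le C_{\rm tr}\mathrm{Kn}_{h,p}^\epsilon/(2d\vartheta^2)\le 1/(2d)\le\beta_e$ for the active floor. The only cosmetic difference is that you apply Young's inequality face by face, whereas the paper applies it after a global Cauchy--Schwarz, and the constants come out the same.
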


\begin{remark}
For the convex subclass of the admissible elements, the trace constant
can be made explicit.  If $K$ contains a ball of radius $\rho h_K$,
then \cite[Lemma~4.4]{cangiani2022essentially} gives
\[
\Norm{v}_{0,\partial K}^2
\le
\frac{(p+1)(p+d)}{\rho h_K}
\Norm{v}_{0,K}^2
\le
\frac{2(d+1)}{\rho}\,
p^2h_K^{-1}\Norm{v}_{0,K}^2.
\]
Thus one may take
\[
C_{\rm tr}
=
\frac{2(d+1)}{\rho},
\]
and the SIP penalty in
\eqref{eq:sip_penalty_no_face_count} becomes
\[
\tau_{{\rm SIP},e}
=
\frac{(d+1)p^2}{\rho\vartheta^2}
\max_{K\in\mathcal T_e}
\frac{\overline D}{h_K}.
\]
For the more general nonconvex admissible elements described in
Assumption~\ref{ass:admissible_mesh}, the analysis uses the uniform
constant $C_{\rm tr}$ from
\eqref{eq:whole_boundary_trace_inverse}.
\end{remark}

\begin{remark}
The convergence analysis assumes exact applications of the
directional transport inverses and of the MIP inverse.  The term
transport sweep denotes an application of a directional inverse after
any directed dependencies, including directed cycles, have been
resolved.  Cycle construction and practical linear solvers are discussed
in \cite{calloo2025cycle,dsacomp}.
\end{remark}

\section{Source iteration and DSA}
\label{sec:si_dsa_results}

We now put the transport sweep and the MIP correction together.  The
argument has a simple structure.  Source iteration produces a scalar
predictor error.  The exact scalar correction removes that error in
one step.  MIP-DSA applies the same correction equation with the exact
scalar form replaced by the MIP diffusion form.  The main theorem
quantifies this replacement in terms of the effective inverse cell
optical thickness $\mathrm{Kn}_{h,p}^\epsilon$ defined in
\eqref{eq:effective_cell_knudsen_number}.

\subsection{Discrete transport problem and source iteration}

Let $q_h\in V_h$ denote the $L^2$ projection of the isotropic source
$q$ onto $V_h$.  The discrete transport solution consists of
$\psi_{m,h}^\star\in V_h$ and $\phi_h^\star\in V_h$ satisfying
\begin{align}
\label{eq:discrete_transport_problem}
b_{m,h}^\epsilon\qp{\psi_{m,h}^\star,v}
&=
\qp{\sigma_s^\epsilon\phi_h^\star+q_h,v}_\Omega
\qquad
\forall v\in V_h,
\\
\label{eq:discrete_scalar_flux}
\phi_h^\star
&=
\sum_mw_m\psi_{m,h}^\star.
\end{align}

\begin{proposition}[Well-posedness of the discrete transport problem]
\label{prop:discrete_transport_wellposedness}
For every $q_h\in V_h$ and every admissible $\epsilon$, the coupled
problem
\eqref{eq:discrete_transport_problem}--%
\eqref{eq:discrete_scalar_flux}
has a unique solution
\[
\left(
\{\psi_{m,h}^\star\}_{m=1}^{N_\omega},
\phi_h^\star
\right)
\in
V_h^{N_\omega}\times V_h.
\]
\end{proposition}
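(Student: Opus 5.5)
Since the coupled problem is square and finite-dimensional, it suffices to prove uniqueness: if $q_h=0$, then every $\psi_{m,h}^\star$ and $\phi_h^\star$ vanish. The unknowns are $\{\psi_{m,h}\}_m\in V_h^{N_\omega}$, because $\phi_h$ is determined by \eqref{eq:discrete_scalar_flux}. The map from these unknowns to the residual of \eqref{eq:discrete_transport_problem}, tested against $V_h^{N_\omega}$, is therefore a linear map between spaces of equal dimension, and injectivity gives existence as well.

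To prove injectivity, take $q_h=0$ and test the $m$-th equation with $v=\psi_{m,h}$. By the energy identity \eqref{eq:transport_energy_identity},
\[
\sigma_t^\epsilon\Norm{\psi_m}_{0,\Omega}^2
+\frac12\sum_e\left\langle|\vec\omega_m\cdot\vec n_e|\jump{\psi_m},\jump{\psi_m}\right\rangle_e
=\sigma_s^\epsilon(\phi,\psi_m)_\Omega .
\]
We multiply by $w_m>0$ and sum over $m$. The right-hand side becomes $\sigma_s^\epsilon\Norm{\phi}_{0,\Omega}^2$, since $\phi=\sum_m w_m\psi_m$. On the left, we use the weighted Cauchy--Schwarz (Jensen) inequality with $\sum_m w_m=1$ from \eqref{eq:discrete_angular_moments}:
\[
\Norm{\phi}_{0,\Omega}^2\le\sum_m w_m\Norm{\psi_m}_{0,\Omega}^2 .
\]
This yields
\[
\sigma_a^\epsilon\sum_m w_m\Norm{\psi_m}_{0,\Omega}^2+(\text{nonnegative jump terms})\le0 .
\]
Here $\sigma_t^\epsilon-\sigma_s^\epsilon=\sigma_a^\epsilon=\epsilon\overline\sigma_a>0$, and $\sigma_s^\epsilon>0$ under the admissible range of $\epsilon$. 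Since every $w_m>0$, it follows that $\psi_m=0$ for all $m$, and hence $\phi=0$.

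An alternative route is to eliminate the angular fluxes. By Proposition~\ref{prop:upwind_properties}, each directional problem is uniquely solvable, so $\psi_m=\sigma_s^\epsilon B_m^{-1}\phi$. The problem then reduces to the scalar fixed-point equation $\phi=\sigma_s^\epsilon\sum_m w_m B_m^{-1}\phi+\text{(source)}$. Its solvability follows once the source-iteration operator has $L^2$ norm at most $\sigma_s^\epsilon/\sigma_t^\epsilon=c_\epsilon<1$. That bound comes from the same energy argument, because coercivity gives $\Norm{B_m^{-1}f}\le\Norm{f}/\sigma_t^\epsilon$. I would present the direct energy argument and mention the contraction as a remark.

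No step should be a real obstacle. The only care points are confirming that $\sigma_a^\epsilon>0$ strictly, which follows from $\overline\sigma_a>0$ and $\epsilon>0$, and handling the Jensen step correctly: it requires the positivity of the weights $w_m$ and their normalisation $\sum_m w_m=1$.
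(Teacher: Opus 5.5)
Your proof is correct, but your main argument takes a different route from the paper. The paper eliminates the angular fluxes. Multiplying by $\epsilon$ and averaging gives the scalar equation \eqref{eq:discrete_scalar_wellposedness_equation}, $(I-c_\epsilon\mathsf H_h^\epsilon)\phi_h^\star=\mathsf H_h^\epsilon(\epsilon\overline\sigma_t^{-1}q_h)$. This equation is uniquely solvable because Lemma~\ref{lem:proof_normalised_response} gives $m_t((I-c_\epsilon\mathsf H_h^\epsilon)v,v)\ge(1-c_\epsilon)\Norm{v}_t^2$. The directional fluxes are then recovered by single transport solves, and one checks that their average reproduces $\phi_h^\star$.

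You instead run the energy argument on the whole angular system in $V_h^{N_\omega}$ and obtain existence from a dimension count. This is more elementary and self-contained. It needs only the energy identity \eqref{eq:transport_energy_identity}, positive normalised weights and $\sigma_a^\epsilon>0$. It never introduces $\mathsf H_h^\epsilon$, and it avoids the reconstruction-and-consistency step.

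The paper's route buys coherence with the rest of the analysis. The same reduction yields $\mathsf G_h^\epsilon=c_\epsilon\mathsf H_h^\epsilon$ and the exact correction form, and it displays the scalar coercivity constant $1-c_\epsilon=\epsilon^2\overline\sigma_a/\overline\sigma_t$ explicitly. You lose nothing quantitative, though. Keeping $q_h$ in your identity gives $\bigl(\sum_m w_m\Norm{\psi_{m,h}^\star}_{0,\Omega}^2\bigr)^{1/2}\le\Norm{q_h}_{0,\Omega}/\sigma_a^\epsilon$, which is the same $1/\sigma_a^\epsilon$ bound for $\phi_h^\star$ that the paper's constant implies.

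Your contraction remark is essentially the paper's reduction, with a Neumann series in place of coercivity. Both versions use only the bound $\Norm{\widehat\Psi_{m,h}^\epsilon(g)}_t\le\Norm{g}_t$, not the self-adjointness of $\mathsf H_h^\epsilon$.
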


The proof is given in
Section~\ref{sec:proof_si_dsa}, after the properties of the
normalised scalar response have been established.

Given $\phi_h^k\in V_h$, source iteration computes
$\psi_{m,h}^{k+1/2}\in V_h$ from
\begin{equation}
\label{eq:discrete_si_sweep}
b_{m,h}^\epsilon
\qp{\psi_{m,h}^{k+1/2},v}
=
\qp{\sigma_s^\epsilon\phi_h^k+q_h,v}_\Omega
\qquad
\forall v\in V_h,
\end{equation}
and sets
\begin{equation}
\label{eq:discrete_si_scalar_update}
\phi_h^{k+1/2}
=
\sum_mw_m\psi_{m,h}^{k+1/2}.
\end{equation}
For unaccelerated source iteration,
\[
\phi_h^{k+1}=\phi_h^{k+1/2}.
\]

Define the scalar errors before and after the predictor step by
\[
e_h^k
:=
\phi_h^\star-\phi_h^k,
\qquad
e_h^{k+1/2}
:=
\phi_h^\star-\phi_h^{k+1/2}.
\]
Since the iteration is linear, there is a linear map
$\mathsf G_h^\epsilon:V_h\to V_h$ such that
\begin{equation}
\label{eq:si_error_map}
e_h^{k+1/2}
=
\mathsf G_h^\epsilon e_h^k.
\end{equation}
We call $\mathsf G_h^\epsilon$ the source-iteration error propagator.

\subsection{Exact scalar correction}

To identify the operator approximated by the diffusion correction, we
isolate the angularly averaged response to a common normalised scalar
source.  For $g\in V_h$, let
$\widehat\Psi_{m,h}^\epsilon(g)\in V_h$ satisfy
\begin{equation}
\label{eq:proof_normalised_response}
\epsilon
b_{m,h}^\epsilon
\qp{\widehat\Psi_{m,h}^\epsilon(g),v}
=
\qp{\overline\sigma_t g,v}_\Omega
\qquad
\forall v\in V_h,
\end{equation}
and define the normalised scalar response by
\begin{equation}
\label{eq:proof_normalised_scalar_response}
\mathsf H_h^\epsilon g
:=
\sum_mw_m\widehat\Psi_{m,h}^\epsilon(g).
\end{equation}
Thus $\mathsf H_h^\epsilon$ maps a common normalised source to the
scalar flux produced by the corresponding directional transport
solves.  In particular, $\mathsf H_h^\epsilon$ is the scalar response
operator, or equivalently the discrete transport Schur complement
restricted to the isotropic component.  The exact correction below is
the scalar operator induced by the transport sweep itself.  Classical
diffusion corrections, including the MIP correction studied here, can
then be interpreted as local approximations of this exact operator.

Lemma~\ref{lem:proof_normalised_response} shows that
$\mathsf H_h^\epsilon$ is self-adjoint, positive definite, and hence
invertible.

For a prescribed scalar flux $u\in V_h$,
$(\mathsf H_h^\epsilon)^{-1}u$ is the normalised common source
required to produce $u$.  The scattering source associated with $u$
is $c_\epsilon u$.  This motivates the exact scalar correction form
\begin{equation}
\label{eq:exact_correction_form_results}
a_{{\rm ex},h}^\epsilon(u,v)
:=
\frac1\epsilon
\qp{
\overline\sigma_t
\left[
\qp{\mathsf H_h^\epsilon}^{-1}u
-c_\epsilon u
\right],
v
}_\Omega.
\end{equation}

Subtracting the source-iteration sweep from the discrete transport
problem and multiplying by $\epsilon$ gives
\[
e_h^{k+1/2}
=
c_\epsilon\mathsf H_h^\epsilon e_h^k.
\]
Consequently,
\[
\qp{\mathsf H_h^\epsilon}^{-1}e_h^{k+1/2}
=
c_\epsilon e_h^k,
\]
and hence
\begin{equation}
\label{eq:exact_correction_identity_results}
a_{{\rm ex},h}^\epsilon
\qp{e_h^{k+1/2},v}
=
\qp{
\sigma_s^\epsilon
\qp{e_h^k-e_h^{k+1/2}},
v
}_\Omega
\qquad
\forall v\in V_h.
\end{equation}
Thus solving the exact correction equation with the
source-iteration defect on the right-hand side returns
$e_h^{k+1/2}$ itself and removes the scalar error in one step.

The form $a_{{\rm ex},h}^\epsilon$ is symmetric positive definite and
supplies the natural energy in which to compare source iteration and
MIP--DSA. Define
\begin{equation}
\label{eq:exact_inner_product}
\Norm{v}_{\rm ex}^2
:=
a_{{\rm ex},h}^\epsilon(v,v).
\end{equation}
For a linear map $\mathsf A:V_h\to V_h$, define
\[
\Norm{\mathsf A}_{\rm ex}
:=
\sup_{0\ne v\in V_h}
\frac{\Norm{\mathsf Av}_{\rm ex}}{\Norm{v}_{\rm ex}}.
\]
We write $\rho(\mathsf A)$ for the spectral radius of $\mathsf A$.

\subsection{MIP-DSA iteration}

MIP-DSA replaces the exact scalar correction form by the local
diffusion form.  Given the predictor $\phi_h^{k+1/2}$, compute
$\delta_h^{k+1}\in V_h$ from
\begin{equation}
\label{eq:mip_dsa_problem_results}
a_{{\rm MIP},h}^\epsilon
\qp{\delta_h^{k+1},v}
=
\qp{
\sigma_s^\epsilon
\qp{\phi_h^{k+1/2}-\phi_h^k},v
}_\Omega
\qquad
\forall v\in V_h,
\end{equation}
and update
\begin{equation}
\label{eq:mip_dsa_update_results}
\phi_h^{k+1}
=
\phi_h^{k+1/2}+\delta_h^{k+1}.
\end{equation}

Since
\[
\phi_h^{k+1/2}-\phi_h^k
=
e_h^k-e_h^{k+1/2},
\]
the right-hand sides of
\eqref{eq:exact_correction_identity_results} and
\eqref{eq:mip_dsa_problem_results} are identical.  MIP-DSA therefore
makes the replacement
\[
a_{{\rm ex},h}^\epsilon
\quad\longrightarrow\quad
a_{{\rm MIP},h}^\epsilon.
\]

This observation gives the error equation that drives the analysis:
\begin{equation}
\label{eq:mip_correction_exact_rhs}
\begin{aligned}
a_{{\rm MIP},h}^\epsilon
\qp{\delta_h^{k+1},v}
&=
a_{{\rm ex},h}^\epsilon
\qp{e_h^{k+1/2},v}
\qquad
\forall v\in V_h,
\\
e_h^{k+1}
&=
e_h^{k+1/2}-\delta_h^{k+1}.
\end{aligned}
\end{equation}
and therefore
\begin{equation}
\label{eq:dsa_defect_equation}
a_{{\rm MIP},h}^\epsilon
\qp{e_h^{k+1},v}
=
a_{{\rm MIP},h}^\epsilon
\qp{e_h^{k+1/2},v}
-
a_{{\rm ex},h}^\epsilon
\qp{e_h^{k+1/2},v}
\qquad
\forall v\in V_h.
\end{equation}
Thus the corrected error is controlled directly by the difference
between the MIP and exact scalar correction forms.

Let $\mathsf E_h^\epsilon:V_h\to V_h$ denote the resulting DSA error
propagator:
\begin{equation}
\label{eq:dsa_error_map}
e_h^{k+1}
=
\mathsf E_h^\epsilon e_h^k.
\end{equation}

\subsection{Main theorem}

For later reference, define the MIP energy norm by
\begin{equation}
\label{eq:mip_energy_norm}
\Norm{v}_{\rm MIP}^2
:=
a_{{\rm MIP},h}^\epsilon(v,v).
\end{equation}
The key estimate is a relative comparison of the exact and MIP
correction forms.  Its size is proportional to
$\mathrm{Kn}_{h,p}^\epsilon$, the inverse cell optical thickness
measured relative to the polynomial resolution scale.  Equation
\eqref{eq:dsa_defect_equation} then converts this form comparison into
the DSA contraction estimate.

\begin{theorem}[Source iteration and MIP-DSA in the optically thick regime]
\label{thm:main_si_dsa}
Let $d\in\{2,3\}$ and $p\ge1$.  Consider the homogeneous-vacuum DG
discretisation above on a family of polytopic meshes with nonempty
physical boundary
$\mathcal E_h^\partial$.  On each physical boundary face, impose
vacuum inflow through the zero exterior trace.  Assume that the
polynomial inverse estimate
\eqref{eq:uniform_polynomial_inverse} and the whole-boundary trace
inverse estimate
\eqref{eq:whole_boundary_trace_inverse} hold with constants independent
of $K$, $h$, $p$, and the number of faces of $K$.  Assume also that
the angular quadrature satisfies the positivity, moment, and
central-pairing conditions stated above.

The conclusions concerning MIP--DSA below apply in the optically thick
regime where the transport floor is active.

The scalar source-iteration error propagator is self-adjoint in the
exact correction inner product and has the exact contraction factor
\begin{equation}
\label{eq:main_si_rate}
\Norm{\mathsf G_h^\epsilon}_{\rm ex}
=
\rho\qp{\mathsf G_h^\epsilon}
=
c_\epsilon
\lambda_{\max}\qp{\mathsf H_h^\epsilon}
\le
1-\epsilon^2
\frac{\overline\sigma_a}{\overline\sigma_t}.
\end{equation}
Under homogeneous vacuum inflow,
$\lambda_{\max}(\mathsf H_h^\epsilon)<1$, so the source-iteration
factor is strictly smaller than the coefficient-only bound
$c_\epsilon$.

There are constants $C>0$ and $\mathrm{Kn}_0>0$, depending only on
$d$, the fixed coefficients, $\vartheta$, the angular quadrature, and
the uniform inverse-estimate constants, such that
\[
\mathrm{Kn}_0
\le
\min\left\{
\mathrm{Kn}_{\rm floor},
\frac{1}{2C}
\right\}.
\]
The constants are independent of $h$, $\epsilon$, $p$, and the number
of element faces.  If
\begin{equation}
\label{eq:main_optically_thick_condition}
\mathrm{Kn}_{h,p}^\epsilon
\le
\mathrm{Kn}_0,
\end{equation}
then the transport floor is active,
\[
\tau_e^\epsilon=\beta_e
\qquad
\forall e\in\mathcal E_h,
\]
and the MIP correction form is symmetric positive definite.  On
physical boundary faces it contains the vacuum-matched contribution
\eqref{eq:vacuum_mip_boundary_contribution}.  Moreover, the exact and MIP
correction forms satisfy the relative estimate
\begin{equation}
\label{eq:main_form_comparison}
\norm{
a_{{\rm ex},h}^\epsilon(u,v)
-
a_{{\rm MIP},h}^\epsilon(u,v)
}
\le
C\,\mathrm{Kn}_{h,p}^\epsilon
\Norm{u}_{\rm MIP}
\Norm{v}_{\rm MIP}
\qquad
\forall u,v\in V_h.
\end{equation}
Consequently, within the optically thick regime
\eqref{eq:main_optically_thick_condition}, the MIP--DSA error
propagator satisfies
\begin{equation}
\label{eq:main_dsa_rate}
\Norm{\mathsf E_h^\epsilon}_{\rm ex}
\le
C\,\mathrm{Kn}_{h,p}^\epsilon\,
\rho\qp{\mathsf G_h^\epsilon},
\end{equation}
and hence
\begin{equation}
\label{eq:main_strict_comparison}
\rho\qp{\mathsf E_h^\epsilon}
\le
\Norm{\mathsf E_h^\epsilon}_{\rm ex}
\le
\frac12\rho\qp{\mathsf G_h^\epsilon}
<
\Norm{\mathsf G_h^\epsilon}_{\rm ex}
=
\rho\qp{\mathsf G_h^\epsilon}.
\end{equation}
Thus MIP--DSA is a strict acceleration of source iteration in the
optically thick regime covered by the relative correction estimate.
\end{theorem}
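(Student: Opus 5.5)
The plan is to prove the statement in four stages. First come the spectral properties of $\mathsf H_h^\epsilon$ and the source-iteration identity \eqref{eq:main_si_rate}. Then comes the relative form comparison \eqref{eq:main_form_comparison}, which is the core of the argument. Finally \eqref{eq:dsa_defect_equation} converts that comparison into \eqref{eq:main_dsa_rate}--\eqref{eq:main_strict_comparison}.

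\textbf{Stage 1: spectral properties of $\mathsf H_h^\epsilon$.} Write $\widehat\Psi_m(g)$ for the normalised response. For self-adjointness, test \eqref{eq:proof_normalised_response} for direction $m$ and source $g$ with $\widehat\Psi_{\overline m}(f)$. The adjoint identity \eqref{eq:upwind_adjoint_identity} then gives
\[
(\overline\sigma_t g,\widehat\Psi_{\overline m}(f))_\Omega=(\overline\sigma_t f,\widehat\Psi_m(g))_\Omega .
\]
Summing with weights $w_m=w_{\overline m}$ yields $(\overline\sigma_t\mathsf H_h^\epsilon g,f)_\Omega=(\overline\sigma_t g,\mathsf H_h^\epsilon f)_\Omega$.

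For positivity, summing the directional equations tested against $\widehat\Psi_{\overline m}(g)$ is not convenient. Instead I will use the pairing to write $\mathsf H_h^\epsilon$ through the even parts $\frac12(\widehat\Psi_m+\widehat\Psi_{\overline m})$. Alternatively I will note that $(\overline\sigma_t\mathsf H_h^\epsilon g,g)=\sum_m w_m\,\epsilon\, b_{\overline m,h}^\epsilon(\widehat\Psi_{\overline m}(g),\widehat\Psi_m(g))$ and apply the energy identity \eqref{eq:transport_energy_identity} to the even/odd split, which gives a sum of squares.

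For the bound $\lambda_{\max}<1$, testing with $v=\widehat\Psi_m(g)$ and using \eqref{eq:transport_energy_identity} gives
\[
\overline\sigma_t\|\widehat\Psi_m\|^2+\tfrac{\epsilon}{2}\sum_e\|\,|\vec\omega_m\cdot\vec n_e|^{1/2}\jump{\widehat\Psi_m}\|^2=(\overline\sigma_t g,\widehat\Psi_m).
\]
Hence $\|\widehat\Psi_m\|\le\|g\|$ and $\lambda_{\max}\le1$. Equality would force every jump to vanish, including the boundary traces. A globally continuous piecewise polynomial that vanishes on $\partial\Omega$ and solves $\vec\omega_m\cdot\nabla\widehat\Psi_m+\sigma_t\widehat\Psi_m=\sigma_t g$ with $\widehat\Psi_m=g$ is then impossible unless $g=0$. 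This is where the nonempty vacuum boundary enters. Since $V_h$ is finite dimensional, the strict inequality holds for the maximal eigenvalue.

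\textbf{Stage 2: the source-iteration propagator.} We have $\mathsf G_h^\epsilon=c_\epsilon\mathsf H_h^\epsilon$, and $a_{{\rm ex},h}^\epsilon$ is a function of $\mathsf H_h^\epsilon$ in the $\overline\sigma_t$-weighted $L^2$ inner product. Positive definiteness of $a_{{\rm ex},h}^\epsilon$ follows from $\lambda_{\max}<1\le 1/c_\epsilon$. Consequently $\mathsf G_h^\epsilon$ commutes with the operator representing $a_{{\rm ex},h}^\epsilon$ and is self-adjoint in $\Norm{\cdot}_{\rm ex}$. Its norm therefore equals its spectral radius, $c_\epsilon\lambda_{\max}(\mathsf H_h^\epsilon)$, which gives \eqref{eq:main_si_rate}. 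Well-posedness of the transport problem follows from the same observation.

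\textbf{Stage 3: the form comparison (main obstacle).} Given $u$, set $g=(\mathsf H_h^\epsilon)^{-1}u$ and split each response into even and odd parts $\widehat\Psi_m=\chi_m+\zeta_m$. The even parts average to $u$, and \eqref{eq:average_transport_identity} controls the averaged jumps. The odd equation is an $\overline\sigma_t$-dominated relation: $\zeta_m$ equals $-\epsilon\overline\sigma_t^{-1}$ times a DG derivative of $\chi_m$, plus a remainder.

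Substituting into $a_{{\rm ex},h}^\epsilon(u,v)$ and using the discrete moments \eqref{eq:discrete_angular_moments} should reproduce three pieces: the volume term $\epsilon\overline D(\nabla u,\nabla v)$, the absorption term, the jump term $\beta_e\langle\jump u,\jump v\rangle$ with the active floor of Proposition~\ref{prop:mip_properties}, and the consistency terms, including the factor $\tfrac12$ on boundary faces from \eqref{eq:vacuum_boundary_jump_average}.

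Every remainder carries one extra factor $\epsilon\,\overline\sigma_t^{-1}$ applied to a derivative or trace of a polynomial. The inverse estimate \eqref{eq:uniform_polynomial_inverse} and the whole-boundary trace estimate \eqref{eq:whole_boundary_trace_inverse} turn each such factor into $p^2/(\sigma_t^\epsilon h_K)\le\mathrm{Kn}_{h,p}^\epsilon$. Using the whole-boundary estimate rather than face-by-face estimates is what keeps the constants free of the face count. The remainders are then bounded by $C\,\mathrm{Kn}_{h,p}^\epsilon\Norm{u}_{\rm MIP}\Norm{v}_{\rm MIP}$ using the coercivity \eqref{eq:mip_coercivity}. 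The delicate part is controlling the deviation $\chi_m-u$ of the even parts from isotropy, by an iteration or a Neumann-series argument in $\mathrm{Kn}$. I would try to do this directly in the MIP energy.

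\textbf{Stage 4: the contraction estimate.} Choose $\mathrm{Kn}_0\le\min\{\mathrm{Kn}_{\rm floor},1/(2C)\}$. Then \eqref{eq:main_form_comparison} gives the norm equivalence $(1-C\mathrm{Kn})\Norm{\cdot}_{\rm MIP}^2\le\Norm{\cdot}_{\rm ex}^2\le(1+C\mathrm{Kn})\Norm{\cdot}_{\rm MIP}^2$. By \eqref{eq:dsa_defect_equation}, $\mathsf E_h^\epsilon=(I-\mathsf A_{\rm MIP}^{-1}\mathsf A_{\rm ex})\mathsf G_h^\epsilon$, where $\mathsf A_{\rm MIP}$ and $\mathsf A_{\rm ex}$ are the operators representing the two forms. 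The first factor has $\Norm{\cdot}_{\rm ex}$-norm at most $C\mathrm{Kn}/(1-C\mathrm{Kn})\le 2C\mathrm{Kn}$; the constant is enlarged by a factor of two, with $\mathrm{Kn}_0$ reduced correspondingly. Multiplying by $\Norm{\mathsf G_h^\epsilon}_{\rm ex}=\rho(\mathsf G_h^\epsilon)$ from Stage 2 gives \eqref{eq:main_dsa_rate}. Then $\mathrm{Kn}\le 1/(2C)$ together with $\rho(\mathsf G_h^\epsilon)>0$ gives \eqref{eq:main_strict_comparison}.
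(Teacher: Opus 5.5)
\textbf{Stages 1, 2 and 4 are sound; the gap is Stage 3.} Stages 1, 2 and 4 essentially reproduce the paper's argument: the normalised-response lemma, $\mathsf G_h^\epsilon=c_\epsilon\mathsf H_h^\epsilon$ with the commuting exact energy, and the factorisation $\mathsf E_h^\epsilon=(I-\mathsf P_h^\epsilon)\mathsf G_h^\epsilon$. Two small remarks:
\begin{itemize}
\item Positivity of $\mathsf H_h^\epsilon$ needs no parity split. Your own energy identity already gives $m_t(\mathsf H_h^\epsilon g,g)=\sum_m w_m\,\epsilon b_{m,h}^\epsilon(\widehat\Psi_m,\widehat\Psi_m)\ge\sum_mw_m\Norm{\widehat\Psi_m}_t^2>0$.
\item In Stage 4, $\mathsf P_h^\epsilon$ is self-adjoint in the exact energy, and its Rayleigh quotients lie in $[1-C\mathrm{Kn}_{h,p}^\epsilon,1+C\mathrm{Kn}_{h,p}^\epsilon]$. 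This gives $\Norm{I-\mathsf P_h^\epsilon}_{\rm ex}\le C\mathrm{Kn}_{h,p}^\epsilon$ directly, with no need to enlarge $C$.
\end{itemize}

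Stage 3, which carries the theorem, is only a heuristic. The tool you propose, an iteration or Neumann series in $\mathrm{Kn}$ on a parity-split system, does not close. The MIP energy controls $\Norm{u}_t^2$ only with weight $\epsilon\overline\sigma_a/\overline\sigma_t$, whereas $a_{{\rm ex},h}^\epsilon$ carries a prefactor $1/\epsilon$. The streaming part of the sweep has size $\kappa_{h,p}=\mathrm{Kn}_{h,p}^\epsilon/\epsilon$. Consider the cubic term of a Neumann expansion of $(\mathsf H_h^\epsilon)^{-1}$, bounded through $\Norm{u}_t\Norm{v}_t$. Relative to $\Norm{u}_{\rm MIP}\Norm{v}_{\rm MIP}$ it contributes $C(\mathrm{Kn}_{h,p}^\epsilon)^3\epsilon^{-2}=C\,\mathrm{Kn}_{h,p}^\epsilon\kappa_{h,p}^2$, which is neither small nor uniform in $h$ and $p$. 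Your rule of one extra factor $\epsilon\overline\sigma_t^{-1}$ per remainder only works if every remainder pairs a first-order quantity of $u$ with one of $v$, around an operator of norm $O(1)$ or $O(\epsilon\kappa_{h,p})$. Your plan never produces that structure.

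\textbf{First missing ingredient: split isotropic/mean-zero, not even/odd.} Write the response as $U=\mathcal Iu+Z$ with $\mathcal AZ=0$. Then exactly $Z=-\epsilon(\mathcal D_{\mu\mu}^\epsilon)^{-1}\mathcal K_hu$, where $\mathcal K_hu=\mathcal Q_0\mathcal B\mathcal Iu$ is the mean-zero part of the streaming of the isotropic $u$. Combine this with $\mathcal A\mathcal B\mathcal Q_0=\mathcal K_h^*\mathcal R$, which follows from $\mathcal B^*=\mathcal R\mathcal B\mathcal R$ and central pairing. The result is the exact identity
\[
a_{{\rm ex},h}^\epsilon(u,v)=j_h(u,v)+\epsilon m_a(u,v)-\epsilon\qp{\mathcal R(\mathcal D_{\mu\mu}^\epsilon)^{-1}\mathcal K_hu,\mathcal K_hv}_{\omega,t}.
\]
Here $\Norm{(\mathcal D_{\mu\mu}^\epsilon)^{-1}}\le1$ comes from the energy identity, with no series and no smallness assumption. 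The resolvent identity then isolates a remainder $\epsilon^2(\mathcal R(\mathcal D_{\mu\mu}^\epsilon)^{-1}\mathcal B_{\mu\mu}\mathcal K_hu,\mathcal K_hv)_{\omega,t}$. It is bounded by $C_B\mathrm{Kn}_{h,p}^\epsilon\,\epsilon^{1/2}\Norm{\mathcal K_hu}_{\omega,t}\,\epsilon^{1/2}\Norm{\mathcal K_hv}_{\omega,t}$.

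\textbf{Second missing ingredient: the order-$\epsilon$ mismatch.} Even with $\tau_e^\epsilon=\beta_e$, the order-$\epsilon$ term is not $d_h$. Write $\mathcal K_h=\mathcal C_h-\mathcal L_h$, a broken angular gradient minus a jump lifting. The cross pairings $(\mathcal C_hu,\mathcal L_hv)_{\omega,t}$ generate the consistency terms, including the boundary factor $\frac12$. What is left over is $-(\mathcal R\mathcal L_hu,\mathcal L_hv)_{\omega,t}$. This term, and the bound $\epsilon\Norm{\mathcal K_hu}_{\omega,t}^2\le C\Norm{u}_{\rm MIP}^2$, are relatively small only because of the whole-boundary lifting estimate $\Norm{\mathcal L_hu}_{\omega,t}^2\le C\kappa_{h,p}j_h(u,u)$. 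That estimate uses $\beta_e\ge1/(2d)$ to weight the jumps.

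This is precisely your unresolved ``delicate part''. The anisotropic dissipation $(\beta_e-\frac12|\vec\omega_m\cdot\vec n_e|)\jump{u}$ is even in angle. Hence your even part $\chi_m-u$ is already $O(\epsilon)$ and driven by the jumps. It is not a higher-order correction that an iteration in $\mathrm{Kn}$ can absorb; it can only be controlled against $j_h$ through that lifting bound.
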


\begin{remark}[Interpretation of the contraction estimate]
Condition~\eqref{eq:main_optically_thick_condition} is equivalent to
the uniform lower bound
\[
\min_{K\in\mathcal T_h}
\frac{\sigma_t^\epsilon h_K}{p^2}
\ge
\frac{1}{\mathrm{Kn}_0}.
\]
Thus every cell must be optically thick on the polynomial inverse
scale $h_K/p^2$.

The two propagators are measured in the same exact-correction energy,
so their contraction factors can be compared directly.  In
particular,
\begin{equation}
\label{eq:dsa_si_factor_ratio}
\frac{
\Norm{\mathsf E_h^\epsilon}_{\rm ex}
}{
\Norm{\mathsf G_h^\epsilon}_{\rm ex}
}
\le
C\mathrm{Kn}_{h,p}^\epsilon.
\end{equation}
The corresponding error estimates are
\begin{align*}
\Norm{e_{h,\mathrm{SI}}^k}_{\rm ex}
&\le
\rho\qp{\mathsf G_h^\epsilon}^k
\Norm{e_h^0}_{\rm ex},
\\
\Norm{e_{h,\mathrm{DSA}}^k}_{\rm ex}
&\le
\left(
C\mathrm{Kn}_{h,p}^\epsilon
\rho\qp{\mathsf G_h^\epsilon}
\right)^k
\Norm{e_h^0}_{\rm ex}.
\end{align*}
Since
$\rho(\mathsf G_h^\epsilon)\le c_\epsilon$, the previous bounds with
$c_\epsilon$ remain valid as upper bounds, but they are no longer
equalities in the vacuum case.
Moreover, the relative form estimate gives
\[
\left(
1-C\mathrm{Kn}_{h,p}^\epsilon
\right)
\Norm{v}_{\rm MIP}^2
\le
\Norm{v}_{\rm ex}^2
\le
\left(
1+C\mathrm{Kn}_{h,p}^\epsilon
\right)
\Norm{v}_{\rm MIP}^2,
\]
so the exact-correction and MIP energies are uniformly equivalent
under~\eqref{eq:main_optically_thick_condition}.

For fixed $h$ and $p$,
$\mathrm{Kn}_{h,p}^\epsilon=O(\epsilon)$.  Moreover, on the fixed
finite-dimensional space,
\[
\mathsf H_h^\epsilon\longrightarrow I
\qquad
\text{as }\epsilon\to0.
\]
Consequently,
\[
\rho\qp{\mathsf G_h^\epsilon}
=
c_\epsilon
\lambda_{\max}\qp{\mathsf H_h^\epsilon}
\longrightarrow1 .
\]
Thus source iteration becomes increasingly ineffective in the
diffusive limit, whereas the MIP--DSA estimate predicts a contraction
factor controlled by the effective cell Knudsen number.
\end{remark}

\section{Proofs of the discrete results}
\label{sec:proof_si_dsa}

Throughout this section, constants denoted by $C$, possibly with
subscripts, are independent of $h$, $\epsilon$, $p$, and the number
of faces of an element.  They may depend on $d$, the fixed
coefficients, the angular quadrature, $\vartheta$, and the constants
in
\eqref{eq:uniform_polynomial_inverse}--%
\eqref{eq:whole_boundary_trace_inverse}.

\subsection{Proofs of the DG properties}
\label{sec:proof_dg_properties}

\begin{proof}[Proof of Proposition~\ref{prop:upwind_properties}]
On every interior or boundary face, the upwind flux satisfies
\[
\qp{\vec \omega_m\cdot\vec  n_e}
u_m^{\rm up}
=
\qp{\vec \omega_m\cdot\vec  n_e}
\avg{u}
+
\frac12
\left|
\vec \omega_m\cdot\vec  n_e
\right|
\jump{u}.
\]
On a physical boundary face this identity uses the zero exterior trace.
Consequently,
\begin{align}
\label{eq:proof_central_upwind_decomposition}
b_{m,h}^\epsilon(u,v)
={}&
-\sum_K
\qp{u,\vec \omega_m\cdot\nabla v}_K
+
\sum_e
\left\langle
\qp{\vec \omega_m\cdot\vec  n_e}
\avg{u},
\jump{v}
\right\rangle_e
\nonumber\\
&+
\frac12
\sum_e
\left\langle
\left|
\vec \omega_m\cdot\vec  n_e
\right|
\jump{u},
\jump{v}
\right\rangle_e
+
\qp{\sigma_t^\epsilon u,v}_\Omega.
\end{align}
Taking $u=v$ and integrating elementwise gives
\[
-\sum_K
\qp{v,\vec \omega_m\cdot\nabla v}_K
=
-\sum_e
\left\langle
\qp{\vec \omega_m\cdot\vec  n_e}
\avg{v},
\jump{v}
\right\rangle_e.
\]
The identity includes the physical boundary because
$\avg{v}=v/2$ and $\jump{v}=v$ there.  The central terms therefore
cancel, proving \eqref{eq:transport_energy_identity}.

Changing $\vec \omega_m$ to
$-\vec \omega_m$ reverses the central part of
\eqref{eq:proof_central_upwind_decomposition} and leaves its symmetric
dissipation and collision terms unchanged.  Elementwise integration
by parts then gives \eqref{eq:upwind_adjoint_identity}.  Pairing $m$
with $\overline m$ and using $w_{\overline m}=w_m$ cancels the angular
average of the central part and proves
\eqref{eq:average_transport_identity}.

For the bounds on $\beta_e$,
\[
2\beta_e
=
\sum_mw_m
\left|
\vec \omega_m\cdot\vec  n_e
\right|.
\]
Since $|x|\ge x^2$ for $|x|\le1$, the second-moment condition gives
\[
2\beta_e
\ge
\sum_mw_m
\qp{\vec \omega_m\cdot\vec  n_e}^2
=
\frac1d.
\]
Thus $\beta_e\ge1/(2d)$.  The upper bound follows from
$|\vec \omega_m\cdot\vec  n_e|\le1$ and
$\sum_mw_m=1$.

Finally, the right-hand side of
\eqref{eq:transport_energy_identity} is positive for every nonzero
$v\in V_h$.  Hence every directional transport problem is uniquely
solvable.
\end{proof}

\begin{proof}[Proof of Proposition~\ref{prop:mip_properties}]
We first verify the whole-boundary flux estimate.  For every
$e\in\mathcal E_h$,
\[
\Norm{
\avg{\overline D\nabla v\cdot\vec  n_e}
}_{0,e}^2
\le
\frac{\overline D^2}{2}
\sum_{K\in\mathcal T_e}
\Norm{
\nabla v|_K\cdot\vec  n_e
}_{0,e}^2.
\]
For a boundary face, the left-hand side contains the additional factor
$1/4$ from
\eqref{eq:vacuum_boundary_jump_average}, so the displayed estimate
continues to hold.

By \eqref{eq:sip_penalty_no_face_count}, for every
$K\in\mathcal T_e$,
\[
\tau_{{\rm SIP},e}^{-1}
\frac{\overline D^2}{2}
\le
\frac{\vartheta^2\overline D}
{C_{\rm tr}p^2}
h_K.
\]
Summing first over the faces of each element and then applying
\eqref{eq:whole_boundary_trace_inverse} componentwise to $\nabla v$,
whose components belong to
$\mathbb P_{p-1}(K)\subset\mathbb P_p(K)$, proves
\eqref{eq:global_flux_trace_no_face_count}.

Since
$\tau_e^\epsilon\ge\epsilon\tau_{{\rm SIP},e}$,
Cauchy--Schwarz and
\eqref{eq:global_flux_trace_no_face_count} give
\begin{align*}
&\epsilon
\left|
\sum_e
\left\langle
\avg{\overline D\nabla v\cdot\vec  n_e},
\jump{v}
\right\rangle_e
\right|
\\
&\qquad\le
\epsilon
\left(
\sum_e\tau_{{\rm SIP},e}^{-1}
\Norm{
\avg{\overline D\nabla v\cdot\vec  n_e}
}_{0,e}^2
\right)^{1/2}
\left(
\sum_e\tau_{{\rm SIP},e}
\Norm{\jump{v}}_{0,e}^2
\right)^{1/2}
\\
&\qquad\le
\vartheta
\left(
\epsilon\overline D
\sum_K\Norm{\nabla v}_{0,K}^2
\right)^{1/2}
\left(
\sum_e\tau_e^\epsilon
\Norm{\jump{v}}_{0,e}^2
\right)^{1/2}.
\end{align*}
Young's inequality therefore yields
\[
2\epsilon
\left|
\sum_e
\left\langle
\avg{\overline D\nabla v\cdot\vec  n_e},
\jump{v}
\right\rangle_e
\right|
\le
\vartheta\epsilon\overline D
\sum_K\Norm{\nabla v}_{0,K}^2
+
\vartheta
\sum_e\tau_e^\epsilon
\Norm{\jump{v}}_{0,e}^2.
\]
Substitution into \eqref{eq:mip_form} proves
\eqref{eq:mip_coercivity}.

Finally,
$\overline D=(d\overline\sigma_t)^{-1}$ gives
\[
\epsilon\tau_{{\rm SIP},e}
\le
\frac{C_{\rm tr}}{2d\vartheta^2}
\max_{K\in\mathcal T_e}
\frac{p^2\epsilon}{\overline\sigma_t h_K}
\le
\frac{C_{\rm tr}}{2d\vartheta^2}
\mathrm{Kn}_{h,p}^\epsilon.
\]
Under \eqref{eq:floor_threshold}, this is at most $1/(2d)$.
Equation \eqref{eq:beta_bounds} therefore gives
$\epsilon\tau_{{\rm SIP},e}\le\beta_e$ on every interior or boundary
face, proving \eqref{eq:active_transport_floor}.

\end{proof}

\subsection{Scalar transport response and source iteration}

Define the collision forms
\begin{equation}
\label{eq:proof_collision_forms}
m_t(u,v)
:=
\qp{\overline\sigma_tu,v}_\Omega,
\qquad
m_a(u,v)
:=
\qp{\overline\sigma_au,v}_\Omega,
\end{equation}
and write
\[
\Norm{v}_t^2:=m_t(v,v).
\]
Since the coefficients are constant,
\begin{equation}
\label{eq:proof_absorption_ratio}
m_a(u,v)
=
\frac{\overline\sigma_a}{\overline\sigma_t}
m_t(u,v).
\end{equation}

\begin{lemma}[Normalised scalar response]
\label{lem:proof_normalised_response}
The map $\mathsf H_h^\epsilon$ is self-adjoint and positive definite
in the $m_t$ inner product, and
\begin{equation}
\label{eq:proof_H_bounds}
0\prec\mathsf H_h^\epsilon\preceq I.
\end{equation}
Let
\[
\lambda_{\max}^\epsilon
:=
\lambda_{\max}\qp{\mathsf H_h^\epsilon}.
\]
Then
\[
0<\lambda_{\max}^\epsilon\le1.
\]
If $\mathcal E_h^\partial\ne\varnothing$ and homogeneous vacuum inflow
is imposed, then
\[
\mathsf H_h^\epsilon\prec I,
\qquad
\lambda_{\max}^\epsilon<1.
\]
Consequently, $a_{{\rm ex},h}^\epsilon$ is symmetric positive
definite.
\end{lemma}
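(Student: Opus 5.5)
The plan is to express $\mathsf H_h^\epsilon$ through the directional solution operators and read off its properties from Proposition~\ref{prop:upwind_properties}. For each $m$, let $\mathsf T_m:V_h\to V_h$ denote the map $g\mapsto\widehat\Psi_{m,h}^\epsilon(g)$. The defining relation \eqref{eq:proof_normalised_response} reads
\[
\epsilon b_{m,h}^\epsilon(\mathsf T_mg,v)=m_t(g,v)\qquad\forall v\in V_h .
\]
Taking $v=\mathsf T_{\overline m}f$ and applying the adjoint identity \eqref{eq:upwind_adjoint_identity} gives
\[
m_t(g,\mathsf T_{\overline m}f)
=\epsilon b_{m,h}^\epsilon(\mathsf T_mg,\mathsf T_{\overline m}f)
=\epsilon b_{\overline m,h}^\epsilon(\mathsf T_{\overline m}f,\mathsf T_mg)
=m_t(f,\mathsf T_mg).
\]
Hence $\mathsf T_m^\ast=\mathsf T_{\overline m}$ in the $m_t$ inner product. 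Since $w_{\overline m}=w_m$, summing over $m$ shows that $\mathsf H_h^\epsilon=\sum_m w_m\mathsf T_m$ is $m_t$-self-adjoint.

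For the spectral bounds, set $\psi_m=\mathsf T_mg$ and $\phi=\mathsf H_h^\epsilon g$. Testing \eqref{eq:proof_normalised_response} with $v=\psi_m$ and using the energy identity \eqref{eq:transport_energy_identity} gives
\[
m_t(g,\psi_m)=\epsilon b_{m,h}^\epsilon(\psi_m,\psi_m)=\Norm{\psi_m}_t^2+\frac{\epsilon}{2}\sum_e\langle|\vec\omega_m\cdot\vec n_e|\jump{\psi_m},\jump{\psi_m}\rangle_e ,
\]
where I use $\sigma_t^\epsilon\epsilon=\overline\sigma_t$. Summing with weights yields $m_t(g,\mathsf H_h^\epsilon g)=\sum_mw_m\Norm{\psi_m}_t^2+(\text{nonnegative jumps})$. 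This is strictly positive for $g\ne0$, because some $\psi_m\neq0$ by unique solvability, so $\mathsf H_h^\epsilon\succ0$.

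For the upper bound, Cauchy--Schwarz in $m_t$ gives $m_t(g,\psi_m)\le\Norm g_t\Norm{\psi_m}_t$. Hence $\Norm{\psi_m}_t^2\le m_t(g,\psi_m)\le \Norm g_t\Norm{\psi_m}_t$, so $\Norm{\psi_m}_t\le\Norm g_t$. Then
\[
m_t(g,\mathsf H_h^\epsilon g)=\sum_m w_m m_t(g,\psi_m)\le\sum_m w_m\Norm g_t\Norm{\psi_m}_t\le\Norm g_t^2,
\]
which gives $\mathsf H_h^\epsilon\preceq I$. Since $\mathsf H_h^\epsilon$ is self-adjoint and positive definite on a finite-dimensional space, $0<\lambda_{\max}^\epsilon\le1$ follows.

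The main step is strictness under vacuum inflow. Suppose $m_t(g,\mathsf H_h^\epsilon g)=\Norm g_t^2$ for some $g\ne0$. Then every inequality in the chain above is an equality for every $m$. So $\psi_m=g$ (equality in Cauchy--Schwarz together with $\Norm{\psi_m}_t^2=m_t(g,\psi_m)$), and all weighted jump terms vanish. Because the quadrature satisfies the second-moment condition, for each face some direction has $\vec\omega_m\cdot\vec n_e\ne0$. Hence $\jump g=0$ on every face $e\in\mathcal E_h$.

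On a physical boundary face, $\jump g=g^+$, so $g$ vanishes there. Interior continuity makes $g$ a continuous piecewise polynomial vanishing on $\partial\Omega$. Now insert $\psi_m=g$ into \eqref{eq:proof_normalised_response}. The jump terms vanish, and the collision term cancels the right-hand side, leaving $-\sum_K(g,\vec\omega_m\cdot\nabla v)_K=0$ for all $v\in V_h$. Elementwise integration by parts, using that $g$ is continuous and zero on $\partial\Omega$, converts this to $(\vec\omega_m\cdot\nabla g,v)_\Omega=0$. Taking $v=\vec\omega_m\cdot\nabla g$, which lies in $V_h$, gives $\vec\omega_m\cdot\nabla g=0$ for all $m$. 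The moment condition \eqref{eq:discrete_angular_moments} forces the $\vec\omega_m$ to span $\mathbb R^d$, so $\nabla g=0$ elementwise. Thus $g$ is continuous and piecewise constant, hence constant on the connected domain $\Omega$. It vanishes on the nonempty boundary faces, so $g=0$, a contradiction. Therefore $\mathsf H_h^\epsilon\prec I$ and $\lambda_{\max}^\epsilon<1$.

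Finally, the eigenvalues of $(\mathsf H_h^\epsilon)^{-1}$ are at least $1$ and $c_\epsilon<1$. Hence $(\mathsf H_h^\epsilon)^{-1}-c_\epsilon I$ is $m_t$-self-adjoint and positive definite. Since $\Omega$ has constant $\overline\sigma_t$, $a_{{\rm ex},h}^\epsilon(u,v)=\epsilon^{-1}m_t\bigl(((\mathsf H_h^\epsilon)^{-1}-c_\epsilon)u,v\bigr)$ is symmetric positive definite.
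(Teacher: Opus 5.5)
Your overall route matches the paper's. You use the energy identity and Cauchy--Schwarz for $0\prec\mathsf H_h^\epsilon\preceq I$, the adjoint identity with central pairing for self-adjointness, and an equality-case argument forcing $\widehat\Psi_{m,h}^\epsilon(g)=g$ with vanishing jumps. Working with the Rayleigh quotient rather than an eigenvector, and writing $a_{{\rm ex},h}^\epsilon$ via $(\mathsf H_h^\epsilon)^{-1}-c_\epsilon I\succeq(1-c_\epsilon)I$, are harmless equivalent variants.

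There is, however, an incorrect step in the strictness argument. After substituting $\psi_m=g$, you say ``the jump terms vanish'', leaving $-\sum_K(g,\vec\omega_m\cdot\nabla v)_K=0$. But the face terms of $b_{m,h}^\epsilon(g,v)$ are $\sum_e\langle(\vec\omega_m\cdot\vec n_e)g_m^{\rm up},\jump{v}\rangle_e$. They pair the upwind trace of $g$ with jumps of the discontinuous test function $v$, so $\jump{g}=0$ does not remove them. Only the dissipative part $\frac12\langle|\vec\omega_m\cdot\vec n_e|\jump{g},\jump{v}\rangle_e$ vanishes. Since $g$ is single-valued on interior faces and zero on $\partial\Omega$, the face terms equal $\sum_{e\in\mathcal E_h^\circ}\langle(\vec\omega_m\cdot\vec n_e)g,\jump{v}\rangle_e$, and at that stage you do not know this is zero. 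Your next sentence makes the mirror-image error. Elementwise integration by parts gives
\[
-\sum_K(g,\vec\omega_m\cdot\nabla v)_K=(\vec\omega_m\cdot\nabla_hg,v)_\Omega-\sum_{e\in\mathcal E_h^\circ}\langle(\vec\omega_m\cdot\vec n_e)g,\jump{v}\rangle_e ,
\]
and continuity of $g$ does not remove the interior face sum, because $v\in V_h$ is discontinuous.

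The two errors cancel exactly, so your conclusion $(\vec\omega_m\cdot\nabla_hg,v)_\Omega=0$ for all $v\in V_h$ is true. As written, though, it rests on two false intermediate identities. The correct step, which is what the paper does, is to keep the upwind face term. It cancels precisely the interior face terms produced by integrating the volume term by parts. The physical-boundary contributions vanish separately, because $g^+=0$ there and hence $g_m^{\rm up}=0$. With that repair, the rest of your argument goes through unchanged.
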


\begin{proof}
Write
$\widehat\Psi_m=\widehat\Psi_{m,h}^\epsilon(g)$.
Testing \eqref{eq:proof_normalised_response} with
$v=\widehat\Psi_m$ and using
\eqref{eq:transport_energy_identity} gives
\[
m_t(g,\widehat\Psi_m)
=
\Norm{\widehat\Psi_m}_t^2
+
\frac{\epsilon}{2}
\sum_e
\left\langle
\left|
\vec \omega_m\cdot\vec  n_e
\right|
\jump{\widehat\Psi_m},
\jump{\widehat\Psi_m}
\right\rangle_e
\ge
\Norm{\widehat\Psi_m}_t^2.
\]
Cauchy--Schwarz therefore gives
\[
\Norm{\widehat\Psi_m}_t
\le
\Norm{g}_t.
\]
Moreover,
\begin{align*}
m_t\qp{\mathsf H_h^\epsilon g,g}
&=
\sum_mw_m m_t\qp{\widehat\Psi_m,g}
\\
&=
\sum_mw_m
\left[
\Norm{\widehat\Psi_m}_t^2
+
\frac{\epsilon}{2}
\sum_e
\left\langle
\left|
\vec \omega_m\cdot\vec  n_e
\right|
\jump{\widehat\Psi_m},
\jump{\widehat\Psi_m}
\right\rangle_e
\right]
>0
\end{align*}
for $g\ne0$, while
\[
m_t\qp{\mathsf H_h^\epsilon g,g}
\le
\sum_mw_m
\Norm{\widehat\Psi_m}_t\Norm{g}_t
\le
\Norm{g}_t^2.
\]
The adjoint identity \eqref{eq:upwind_adjoint_identity} shows that the
response in direction $\overline m$ is the $m_t$-adjoint of the
response in direction $m$. Central pairing therefore makes
$\mathsf H_h^\epsilon$ self-adjoint. The preceding quadratic estimate
then proves the Loewner bounds in \eqref{eq:proof_H_bounds}.

Suppose first that
$\mathcal E_h^\partial\ne\varnothing$ and that
\[
\mathsf H_h^\epsilon g=g
\]
for some $g\in V_h$.  The preceding estimates then give
\begin{align*}
\Norm{g}_t^2
&=
m_t\qp{\mathsf H_h^\epsilon g,g}
\\
&=
\sum_mw_m
m_t\qp{g,\widehat\Psi_{m,h}^\epsilon(g)}
\\
&\le
\sum_mw_m
\Norm{g}_t
\Norm{\widehat\Psi_{m,h}^\epsilon(g)}_t
\\
&\le
\Norm{g}_t^2.
\end{align*}
Hence equality holds at every step.  Since every quadrature weight is
positive, equality holds separately for every ordinate.  Equality in
Cauchy--Schwarz and
\[
\Norm{\widehat\Psi_{m,h}^\epsilon(g)}_t
\le
\Norm{g}_t
\]
therefore imply
\[
\widehat\Psi_{m,h}^\epsilon(g)=g
\qquad
\text{for every }m.
\]
Equality in the directional energy identity also gives
\[
\sum_{e\in\mathcal E_h}
\left\langle
\left|
\vec \omega_m\cdot\vec  n_e
\right|
\jump{g},
\jump{g}
\right\rangle_e
=
0
\qquad
\text{for every }m.
\]
For every face normal $\vec  n_e$, the second-moment condition
implies
\[
\sum_mw_m
\qp{\vec \omega_m\cdot\vec  n_e}^2
=
\frac1d,
\]
so at least one ordinate has
$\vec \omega_m\cdot\vec  n_e\ne0$.  It follows that
\[
\jump{g}=0
\quad\text{on }\mathcal E_h^\circ,
\qquad
g=0
\quad\text{on }\mathcal E_h^\partial.
\]

Substituting
$\widehat\Psi_{m,h}^\epsilon(g)=g$ into
\eqref{eq:proof_normalised_response} and cancelling the collision term
gives
\[
-\sum_{K\in\mathcal T_h}
\qp{g,\vec \omega_m\cdot\nabla v}_K
+
\sum_{e\in\mathcal E_h}
\left\langle
\qp{\vec \omega_m\cdot\vec  n_e}
g_m^{\rm up},
\jump{v}
\right\rangle_e
=
0
\qquad
\forall v\in V_h.
\]
Because $g$ is single-valued on interior faces and has zero trace on
physical boundary faces, elementwise integration by parts reduces this
identity to
\[
\sum_{K\in\mathcal T_h}
\qp{
\vec \omega_m\cdot\nabla g,
v
}_K
=
0
\qquad
\forall v\in V_h.
\]
Since
$\vec \omega_m\cdot\nabla g\in V_h$, this implies
\[
\vec \omega_m\cdot\nabla_hg=0
\qquad
\text{for every }m.
\]
The second-moment condition implies that the ordinate directions span
$\mathbb R^d$, and hence
$\nabla_hg=0$.  Mesh connectedness and continuity across interior
faces show that $g$ is globally constant, while its zero trace on the
nonempty physical boundary gives $g=0$.  Thus the eigenspace of
$\mathsf H_h^\epsilon$ associated with the eigenvalue $1$ is trivial.
Since $\mathsf H_h^\epsilon$ is self-adjoint and
$0\prec\mathsf H_h^\epsilon\preceq I$, this proves
\[
\mathsf H_h^\epsilon\prec I
\]
under homogeneous vacuum inflow.

Using
\[
1-c_\epsilon
=
\epsilon^2
\frac{\overline\sigma_a}{\overline\sigma_t}
\]
in \eqref{eq:exact_correction_form_results} gives
\begin{equation}
\label{eq:proof_normalised_exact_form}
a_{{\rm ex},h}^\epsilon(u,v)
=
\frac1\epsilon
m_t\qp{
\qp{\mathsf H_h^\epsilon}^{-1}u-u,
v
}
+
\epsilon m_a(u,v).
\end{equation}
Since $(\mathsf H_h^\epsilon)^{-1}\succeq I$ and
$m_a(v,v)>0$ for $v\ne0$, this form is symmetric positive definite.
\end{proof}

\begin{proof}[Proof of Proposition~%
\ref{prop:discrete_transport_wellposedness}]
For a solution of
\eqref{eq:discrete_transport_problem}, multiplication of each
directional equation by $\epsilon$ gives
\[
\epsilon
b_{m,h}^\epsilon
\qp{\psi_{m,h}^\star,v}
=
m_t\left(
c_\epsilon\phi_h^\star
+
\frac{\epsilon}{\overline\sigma_t}q_h,
v
\right)
\qquad
\forall v\in V_h.
\]
By the definition of the normalised scalar response,
angular averaging of these directional equations yields
\[
\phi_h^\star
=
\mathsf H_h^\epsilon
\left(
c_\epsilon\phi_h^\star
+
\frac{\epsilon}{\overline\sigma_t}q_h
\right).
\]
Equivalently,
\begin{equation}
\label{eq:discrete_scalar_wellposedness_equation}
\left(
I-c_\epsilon\mathsf H_h^\epsilon
\right)\phi_h^\star
=
\mathsf H_h^\epsilon
\left(
\frac{\epsilon}{\overline\sigma_t}q_h
\right).
\end{equation}

Since
$0\prec\mathsf H_h^\epsilon\preceq I$ and
$0<c_\epsilon<1$, we have
\[
m_t\left(
\left(
I-c_\epsilon\mathsf H_h^\epsilon
\right)v,v
\right)
\ge
(1-c_\epsilon)\Norm{v}_t^2
\qquad
\forall v\in V_h.
\]
Thus
$I-c_\epsilon\mathsf H_h^\epsilon$ is positive definite and
invertible, so
\eqref{eq:discrete_scalar_wellposedness_equation} has a unique solution
$\phi_h^\star$.

For this scalar flux, the well-posedness of each directional transport
problem determines a unique
$\psi_{m,h}^\star\in V_h$.  If
\[
\widetilde\phi_h
:=
\sum_m w_m\psi_{m,h}^\star
\]
denotes their angular average, then the definition of
$\mathsf H_h^\epsilon$ and
\eqref{eq:discrete_scalar_wellposedness_equation} give
\[
\widetilde\phi_h
=
\mathsf H_h^\epsilon
\left(
c_\epsilon\phi_h^\star
+
\frac{\epsilon}{\overline\sigma_t}q_h
\right)
=
\phi_h^\star.
\]
Hence the reconstructed directional fluxes satisfy the required scalar
flux relation and form a solution of the coupled discrete problem.
Uniqueness of the scalar equation and of the directional transport
solves proves uniqueness of the coupled solution.
\end{proof}

\begin{lemma}[Source-iteration contraction]
\label{lem:proof_source_iteration}
The source-iteration error propagator satisfies
\begin{equation}
\label{eq:proof_G_cH}
\mathsf G_h^\epsilon
=
c_\epsilon\mathsf H_h^\epsilon,
\end{equation}
and
\begin{equation}
\label{eq:proof_si_exact_rate}
\Norm{\mathsf G_h^\epsilon}_{\rm ex}
=
\rho\qp{\mathsf G_h^\epsilon}
=
c_\epsilon\lambda_{\max}^\epsilon
\le
c_\epsilon.
\end{equation}
The inequality is strict under homogeneous vacuum inflow. Moreover,
the exact correction identity
\eqref{eq:exact_correction_identity_results} holds.
\end{lemma}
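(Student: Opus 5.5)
I will first identify the error propagator $\mathsf G_h^\epsilon$ directly from the discrete equations. Subtracting the source-iteration sweep \eqref{eq:discrete_si_sweep} from \eqref{eq:discrete_transport_problem} shows that the directional error $\psi_{m,h}^\star-\psi_{m,h}^{k+1/2}$ satisfies
\[
\epsilon\, b_{m,h}^\epsilon\bigl(\psi_{m,h}^\star-\psi_{m,h}^{k+1/2},v\bigr)=\epsilon\bigl(\sigma_s^\epsilon e_h^k,v\bigr)_\Omega=m_t(c_\epsilon e_h^k,v)
\qquad\forall v\in V_h,
\]
where I use $\epsilon\sigma_s^\epsilon=c_\epsilon\overline\sigma_t$. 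By uniqueness of the directional solves (Proposition~\ref{prop:upwind_properties}), this error equals $\widehat\Psi_{m,h}^\epsilon(c_\epsilon e_h^k)$. Taking the angular average via \eqref{eq:discrete_scalar_flux} and \eqref{eq:discrete_si_scalar_update}, together with linearity of $\mathsf H_h^\epsilon$, gives $e_h^{k+1/2}=c_\epsilon\mathsf H_h^\epsilon e_h^k$, which is \eqref{eq:proof_G_cH}. The exact correction identity then follows by applying $(\mathsf H_h^\epsilon)^{-1}$ (invertible by Lemma~\ref{lem:proof_normalised_response}) and substituting into \eqref{eq:exact_correction_form_results}. Concretely, I will show
\[
\tfrac1\epsilon\overline\sigma_t\bigl[c_\epsilon e_h^k-c_\epsilon e_h^{k+1/2}\bigr]=\sigma_s^\epsilon\bigl(e_h^k-e_h^{k+1/2}\bigr).
\]

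For the rate, the key observation is that $a_{{\rm ex},h}^\epsilon(u,v)=\epsilon^{-1}m_t(\mathsf Ku,v)$ with $\mathsf K:=(\mathsf H_h^\epsilon)^{-1}-c_\epsilon I$. The operator $\mathsf K$ is an $m_t$-self-adjoint function of $\mathsf H_h^\epsilon$. Because $\mathsf H_h^\epsilon$ is $m_t$-self-adjoint and positive definite, I will pick an $m_t$-orthonormal eigenbasis $\{g_i\}$ with eigenvalues $\lambda_i\in(0,1]$. This basis simultaneously diagonalises $\mathsf G_h^\epsilon=c_\epsilon\mathsf H_h^\epsilon$, with eigenvalues $c_\epsilon\lambda_i$, and $\mathsf K$, with eigenvalues $\lambda_i^{-1}-c_\epsilon>0$. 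Hence the $g_i$ are also orthogonal in $a_{{\rm ex},h}^\epsilon$. This is what makes $\mathsf G_h^\epsilon$ self-adjoint in the exact inner product: $a_{\rm ex}(\mathsf G u,v)=a_{\rm ex}(u,\mathsf G v)$ because $\mathsf G$ commutes with $\mathsf K$. Expanding $v=\sum_i\alpha_ig_i$ gives
\[
\Norm{\mathsf G_h^\epsilon v}_{\rm ex}^2=\sum_i(c_\epsilon\lambda_i)^2\epsilon^{-1}(\lambda_i^{-1}-c_\epsilon)\alpha_i^2\le(c_\epsilon\lambda_{\max}^\epsilon)^2\Norm{v}_{\rm ex}^2,
\]
with equality attained at the top eigenvector. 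Therefore $\Norm{\mathsf G_h^\epsilon}_{\rm ex}=c_\epsilon\lambda_{\max}^\epsilon$. Since the spectrum of $\mathsf G_h^\epsilon$ is $\{c_\epsilon\lambda_i\}\subset(0,c_\epsilon]$, the spectral radius is the same number.

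The bound $\le c_\epsilon$ comes from $\lambda_{\max}^\epsilon\le1$. Strictness under vacuum inflow with $\mathcal E_h^\partial\neq\varnothing$ comes from $\lambda_{\max}^\epsilon<1$, both supplied by Lemma~\ref{lem:proof_normalised_response}, together with $c_\epsilon>0$.

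There is no real obstacle. The one point needing care is to make the simultaneous diagonalisation explicit, so that self-adjointness in the $\mathrm{ex}$ inner product and the norm identity are rigorous rather than asserted. A secondary check is that the normalisation by $\epsilon$ in \eqref{eq:proof_normalised_response} matches the scattering term exactly, which is the $\epsilon\sigma_s^\epsilon=c_\epsilon\overline\sigma_t$ identity.
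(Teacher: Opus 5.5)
Your proposal is correct and follows the paper's proof essentially step for step: the same subtract-and-scale argument with $\epsilon\sigma_s^\epsilon=c_\epsilon\overline\sigma_t$ gives $\mathsf G_h^\epsilon=c_\epsilon\mathsf H_h^\epsilon$; the paper also observes that the operator $\epsilon^{-1}[(\mathsf H_h^\epsilon)^{-1}-c_\epsilon I]$, which represents $a_{{\rm ex},h}^\epsilon$ relative to $m_t$, is a function of $\mathsf H_h^\epsilon$ and hence commutes with $\mathsf G_h^\epsilon$; and the exact correction identity follows by the same substitution. Your explicit $m_t$-orthonormal eigenbasis computation only spells out the step the paper states in one line, namely that $\mathsf G_h^\epsilon$ is self-adjoint and positive in the exact energy, so its norm equals its spectral radius.
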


\begin{proof}
Set
\[
\eta_{m,h}^{k+1/2}
:=
\psi_{m,h}^\star-\psi_{m,h}^{k+1/2}.
\]
Subtracting the source-iteration sweep
\eqref{eq:discrete_si_sweep} from the discrete transport equation and
multiplying by $\epsilon$ gives
\[
\epsilon
b_{m,h}^\epsilon
\qp{\eta_{m,h}^{k+1/2},v}
=
m_t\qp{c_\epsilon e_h^k,v}.
\]
Angular averaging proves \eqref{eq:proof_G_cH}. The eigenvalues of $\mathsf H_h^\epsilon$ lie in $(0,1]$.  Therefore
\[
\rho\qp{\mathsf G_h^\epsilon}
=
c_\epsilon\lambda_{\max}^\epsilon
\le
c_\epsilon.
\]
Lemma \ref{lem:proof_normalised_response} shows that the inequality is
strict under homogeneous vacuum inflow.

By \eqref{eq:proof_absorption_ratio} and
\eqref{eq:proof_normalised_exact_form}, the operator representing the
exact energy relative to $m_t$ is
\[
\frac1\epsilon
\left[
\qp{\mathsf H_h^\epsilon}^{-1}-I
\right]
+
\epsilon
\frac{\overline\sigma_a}{\overline\sigma_t}I.
\]
It is therefore a function of the self-adjoint operator
$\mathsf H_h^\epsilon$ and commutes with
$\mathsf G_h^\epsilon=c_\epsilon\mathsf H_h^\epsilon$.
Consequently, $\mathsf G_h^\epsilon$ is self-adjoint and positive in
the exact energy. Its operator norm equals its spectral radius,
proving \eqref{eq:proof_si_exact_rate}.

For $u=e_h^{k+1/2}$,
\eqref{eq:proof_G_cH} gives
\[
\qp{\mathsf H_h^\epsilon}^{-1}e_h^{k+1/2}
=
c_\epsilon e_h^k.
\]
Substitution into \eqref{eq:proof_normalised_exact_form}, together
with
\[
1-c_\epsilon
=
\epsilon^2
\frac{\overline\sigma_a}{\overline\sigma_t},
\]
gives
\[
a_{{\rm ex},h}^\epsilon
\qp{e_h^{k+1/2},v}
=
\qp{
\sigma_s^\epsilon
\qp{e_h^k-e_h^{k+1/2}},
v
}_\Omega,
\]
which is \eqref{eq:exact_correction_identity_results}.
\end{proof}

\subsection{Exact macro--micro factorisation}

Let
\[
\mathbb V_h
:=
V_h^{N_\omega}
\]
with angular collision inner product
\begin{equation}
\label{eq:proof_angular_inner_product}
\qp{U,V}_{\omega,t}
:=
\sum_mw_m m_t(U_m,V_m),
\qquad
\Norm{U}_{\omega,t}^2
:=
\qp{U,U}_{\omega,t}.
\end{equation}
Define the isotropic injection and angular average by
\[
\qp{\mathcal Iu}_m:=u,
\qquad
\mathcal AU:=\sum_mw_mU_m.
\]
Then $\mathcal A=\mathcal I^*$ and
\[
\mathcal Q_0
:=
I-\mathcal I\mathcal A
\]
is the orthogonal projection onto
\[
\mathbb V_h^0
:=
\left\{
U\in\mathbb V_h:\mathcal AU=0
\right\}.
\]
The angular reversal is
\[
\qp{\mathcal RU}_m:=U_{\overline m}.
\]
It is a self-adjoint isometry and commutes with $\mathcal Q_0$.

Define the angular streaming operator
$\mathcal B:\mathbb V_h\to\mathbb V_h$ by
\begin{equation}
\label{eq:proof_directional_B}
m_t\qp{\qp{\mathcal BU}_m,v}
:=
-\sum_K
\qp{U_m,\vec \omega_m\cdot\nabla v}_K
+
\sum_e
\left\langle
\qp{\vec \omega_m\cdot\vec  n_e}
(U_m)^{\rm up},
\jump{v}
\right\rangle_e
\end{equation}
for every $v\in V_h$ and every angular component $m$.
By \eqref{eq:upwind_adjoint_identity},
\begin{equation}
\label{eq:proof_B_reversal_adjoint}
\mathcal B^*
=
\mathcal R\mathcal B\mathcal R.
\end{equation}
On a physical boundary face, the face term in
\eqref{eq:proof_directional_B} uses the zero exterior trace and hence
contains only the outflow contribution.  The reversal identity
\eqref{eq:proof_B_reversal_adjoint} remains valid because reversal
exchanges vacuum inflow and outflow.

The following inverse bound is the point at which the mesh assumptions
enter the macro--micro estimate.

\begin{lemma}[Uniform streaming bound]
\label{lem:proof_uniform_streaming}
Set
\begin{equation}
\label{eq:proof_kappa_h}
\kappa_{h,p}
:=
\max_{K\in\mathcal T_h}
\frac{p^2}{\overline\sigma_t h_K}
=
\frac{\mathrm{Kn}_{h,p}^\epsilon}{\epsilon}.
\end{equation}
There is a constant $C_B>0$ such that
\begin{equation}
\label{eq:proof_B_uniform_bound}
\Norm{\mathcal BU}_{\omega,t}
\le
C_B\kappa_{h,p}\Norm{U}_{\omega,t}
\qquad
\forall U\in\mathbb V_h.
\end{equation}
The constant $C_B$ is independent of $h$, $p$, and the number of
element faces.
\end{lemma}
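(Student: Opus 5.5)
The bound is a componentwise statement, so I will prove it for each ordinate $m$ and then sum with the weights $w_m$. Fix $m$ and write $u=U_m\in V_h$. By the definition \eqref{eq:proof_directional_B}, $\Norm{(\mathcal BU)_m}_t$ is the supremum over $v\in V_h$ with $\Norm{v}_t=1$ of the streaming functional
\[
\ell_m(u,v):=-\sum_K\qp{u,\vec\omega_m\cdot\nabla v}_K+\sum_e\left\langle\qp{\vec\omega_m\cdot\vec n_e}u_m^{\rm up},\jump{v}\right\rangle_e .
\]
It therefore suffices to show $|\ell_m(u,v)|\le C_B\kappa_{h,p}\Norm{u}_t\Norm{v}_t$, where $C_B$ depends only on $C_{\rm inv}$, $C_{\rm tr}$ and $d$. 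Squaring, multiplying by $w_m$, summing over $m$ and using $\sum_mw_m=1$ then gives \eqref{eq:proof_B_uniform_bound}.

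The volume term is handled with $|\vec\omega_m|=1$, Cauchy--Schwarz and the inverse estimate \eqref{eq:uniform_polynomial_inverse}:
\[
\qp{u,\vec\omega_m\cdot\nabla v}_K\le C_{\rm inv}p^2h_K^{-1}\Norm{u}_{0,K}\Norm{v}_{0,K}.
\]
Since $\overline\sigma_t$ is constant, $p^2h_K^{-1}\Norm{\cdot}_{0,K}^2$ converts to $\overline\sigma_t\kappa_{h,p}\Norm{\cdot}_{0,K}^2\cdot\overline\sigma_t^{-1}$, and a discrete Cauchy--Schwarz over $K$ yields a bound $C_{\rm inv}\kappa_{h,p}\Norm{u}_t\Norm{v}_t$.

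For the face term, bound $|\vec\omega_m\cdot\vec n_e|\le1$ and $|u_m^{\rm up}|\le|u^+|+|u^-|$, and expand $\jump{v}$ into its (at most two) traces; on boundary faces the zero exterior trace simply removes terms. Each face contribution is then a sum of products $\Norm{u|_K}_{0,e}\Norm{v|_{K'}}_{0,e}$ with $K,K'\in\mathcal T_e$. The key step, and the main obstacle, is to avoid any face-by-face inverse estimate, since that would introduce the face count or relative face sizes. The plan is to weight each product asymmetrically:
\[
\Norm{u|_K}_{0,e}\Norm{v|_{K'}}_{0,e}\le\tfrac12\left(\gamma\Norm{u|_K}_{0,e}^2+\gamma^{-1}\Norm{v|_{K'}}_{0,e}^2\right),
\]
with $\gamma$ a power of $h_K/h_{K'}$ chosen as follows. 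Reorganising the sum element by element collects $\Norm{u}_{0,\partial K}^2$ and $\Norm{v}_{0,\partial K}^2$ over whole element boundaries. The whole-boundary trace estimate \eqref{eq:whole_boundary_trace_inverse} then applies once per element, with constant $C_{\rm tr}p^2h_K^{-1}$ independent of the number of faces. The subtlety is the mismatch $h_K\neq h_{K'}$ across an interior face when no quasi-uniformity is assumed. I would first try to absorb it by bounding every face weight by $\max_{K\in\mathcal T_h}p^2/h_K$; this is acceptable because $\kappa_{h,p}$ is itself defined with a global maximum. Concretely, use $\sum_e\Norm{u|_K}_{0,e}^2\le\Norm{u}_{0,\partial K}^2\le C_{\rm tr}p^2h_K^{-1}\Norm{u}_{0,K}^2\le C_{\rm tr}\overline\sigma_t\kappa_{h,p}\Norm{u}_{0,K}^2$, take $\gamma=1$, and sum over elements, using that each element-face pair is counted a bounded number of times (at most twice). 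This gives a face bound of the form $C\,C_{\rm tr}\kappa_{h,p}\Norm{u}_t\Norm{v}_t$, with the $u$ and $v$ terms balanced by a final Young step $ab\le\frac12(a^2+b^2)$ applied after scaling to $\Norm{u}_t=\Norm{v}_t=1$.

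Combining the volume and face bounds gives $C_B=C_{\rm inv}+2C_{\rm tr}$ (up to a factor independent of $h$, $p$ and the face count). Carrying out the final Young step with normalised arguments avoids losing a square root of $\kappa_{h,p}$.
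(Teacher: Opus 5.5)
Your proof is correct and is essentially the paper's argument. Both use duality in the $m_t$ inner product, the inverse estimate for the volume term, and, for the face term, collect the face traces into whole element boundaries. The whole-boundary trace estimate is then applied once per element with the global factor $p^2\max_K h_K^{-1}=\overline\sigma_t\kappa_{h,p}$, which is exactly how the paper avoids face-count dependence.

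The only cosmetic differences are these. You use Young's inequality with normalisation and get $2C_{\rm tr}$, where the paper applies Cauchy--Schwarz to $\sum_e\Norm{u_m^{\rm up}}_{0,e}^2$ and $\sum_e\Norm{\jump{v}}_{0,e}^2$ and gets $\sqrt2\,C_{\rm tr}$. The asymmetric $\gamma$-weighting you mention is never needed.
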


\begin{proof}
Fix an angular component $m$. For $u,v\in V_h$, the volume term
satisfies
\[
\left|
\sum_K\qp{u,\vec \omega_m\cdot\nabla v}_K
\right|
\le
C_{\rm inv}p^2
\left(
\max_Kh_K^{-1}
\right)
\Norm{u}_{0,\Omega}\Norm{v}_{0,\Omega}.
\]
For the face term, $|\vec \omega_m\cdot\vec  n_e|\le1$,
and the upwind trace on an interior face is one of the two adjacent
element traces, while on a physical boundary face it is either the
unique interior trace or zero. Consequently,
\[
\sum_e\Norm{u_m^{\rm up}}_{0,e}^2
\le
\sum_K\Norm{u}_{0,\partial K}^2,
\qquad
\sum_e\Norm{\jump{v}}_{0,e}^2
\le
2\sum_K\Norm{v}_{0,\partial K}^2.
\]
Cauchy--Schwarz followed by the whole-boundary trace estimate gives
\begin{align*}
&\left|
\sum_e
\left\langle
\qp{\vec \omega_m\cdot\vec  n_e}
u_m^{\rm up},
\jump{v}
\right\rangle_e
\right|
\\
&\qquad\le
\left(
\sum_K\Norm{u}_{0,\partial K}^2
\right)^{1/2}
\left(
2\sum_K\Norm{v}_{0,\partial K}^2
\right)^{1/2}
\\
&\qquad\le
\sqrt2C_{\rm tr}p^2
\left(
\max_Kh_K^{-1}
\right)
\Norm{u}_{0,\Omega}\Norm{v}_{0,\Omega}.
\end{align*}
Since
\[
p^2\max_Kh_K^{-1}
=
\overline\sigma_t\kappa_{h,p},
\]
duality in the $m_t$ inner product gives
\[
\Norm{\qp{\mathcal BU}_m}_t
\le
C_B\kappa_{h,p}\Norm{U_m}_t.
\]
Squaring, multiplying by $w_m$, and summing over $m$ proves
\eqref{eq:proof_B_uniform_bound}.
\end{proof}

Define the mean-zero streaming operator and its micro block by
\begin{equation}
\label{eq:proof_K_and_micro_B}
\mathcal K_h
:=
\mathcal Q_0\mathcal B\mathcal I,
\qquad
\mathcal B_{\mu\mu}
:=
\left.
\mathcal Q_0\mathcal B\mathcal Q_0
\right|_{\mathbb V_h^0},
\qquad
\mathcal D_{\mu\mu}^\epsilon
:=
I+\epsilon\mathcal B_{\mu\mu}.
\end{equation}
For $Z\in\mathbb V_h^0$,
\[
\qp{\mathcal D_{\mu\mu}^\epsilon Z,Z}_{\omega,t}
=
\Norm{Z}_{\omega,t}^2
+
\frac{\epsilon}{2}
\sum_{m,e}w_m
\left\langle
\left|
\vec \omega_m\cdot\vec  n_e
\right|
\jump{Z_m},
\jump{Z_m}
\right\rangle_e
\ge
\Norm{Z}_{\omega,t}^2.
\]
Thus $\mathcal D_{\mu\mu}^\epsilon$ is invertible and
\begin{equation}
\label{eq:proof_micro_resolvent_bound}
\Norm{
\qp{\mathcal D_{\mu\mu}^\epsilon}^{-1}
}_{\omega,t\to\omega,t}
\le1.
\end{equation}

\begin{lemma}[Exact weak scalar factorisation]
\label{lem:proof_exact_factorisation}
Define
\begin{equation}
\label{eq:proof_second_order_transport_form}
\ell_h(u,v)
:=
m_a(u,v)
-
\qp{\mathcal R\mathcal K_hu,\mathcal K_hv}_{\omega,t}.
\end{equation}
Then
\begin{equation}
\label{eq:proof_exact_correction_expansion}
a_{{\rm ex},h}^\epsilon(u,v)
=
j_h(u,v)
+
\epsilon\ell_h(u,v)
+
\epsilon^2r_{h,\epsilon}(u,v),
\end{equation}
where
\begin{equation}
\label{eq:proof_exact_remainder}
r_{h,\epsilon}(u,v)
:=
\qp{
\mathcal R
\qp{\mathcal D_{\mu\mu}^\epsilon}^{-1}
\mathcal B_{\mu\mu}\mathcal K_hu,
\mathcal K_hv
}_{\omega,t}.
\end{equation}
Moreover,
\begin{equation}
\label{eq:proof_remainder_bound}
\norm{r_{h,\epsilon}(u,v)}
\le
C_B\kappa_{h,p}
\Norm{\mathcal K_hu}_{\omega,t}
\Norm{\mathcal K_hv}_{\omega,t}.
\end{equation}
\end{lemma}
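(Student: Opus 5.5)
The plan is to write the normalised directional solves as a single block operator on $\mathbb V_h$, eliminate the micro (mean-zero) component by a Schur complement, and read off $(\mathsf H_h^\epsilon)^{-1}$ exactly. The two remaining steps are a one-term expansion of the micro resolvent and the operator bounds already established.

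\textbf{Step 1: operator form of $\mathsf H_h^\epsilon$.} By \eqref{eq:dg_transport_form} and \eqref{eq:proof_directional_B}, and since $\epsilon\sigma_t^\epsilon=\overline\sigma_t$, we have $\epsilon b_{m,h}^\epsilon(U_m,v)=m_t\bigl(((I+\epsilon\mathcal B)U)_m,v\bigr)$. Hence \eqref{eq:proof_normalised_response} reads $(I+\epsilon\mathcal B)\widehat\Psi=\mathcal Ig$, and therefore $\mathsf H_h^\epsilon=\mathcal A(I+\epsilon\mathcal B)^{-1}\mathcal I$. Invertibility of $I+\epsilon\mathcal B$ follows from \eqref{eq:transport_energy_identity}.

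\textbf{Step 2: block structure and Schur complement.} Split $\mathbb V_h=\mathcal I V_h\oplus\mathbb V_h^0$. Since $\sum_mw_m=1$, the map $\mathcal I$ is an isometry from $(V_h,m_t)$ into $\mathbb V_h$. The blocks are identified as follows.
\begin{itemize}
\item The macro--macro block $\mathcal A\mathcal B\mathcal I$ satisfies $m_t(\mathcal A\mathcal B\mathcal Iu,v)=\sum_mw_m[\text{streaming part of }b_{m,h}^\epsilon](u,v)=j_h(u,v)$, by \eqref{eq:average_transport_identity}.
\item The micro--macro block is $\mathcal K_h$.
\item The micro--micro block is $\mathcal D_{\mu\mu}^\epsilon$, which is invertible by \eqref{eq:proof_micro_resolvent_bound}.
\item For the macro--micro block, \eqref{eq:proof_B_reversal_adjoint}, $\mathcal R\mathcal I=\mathcal I$ and $\mathcal R\mathcal Q_0=\mathcal Q_0\mathcal R$ give, for $Z\in\mathbb V_h^0$,
\[
m_t(\mathcal A\mathcal B Z,v)=(Z,\mathcal Q_0\mathcal R\mathcal B\mathcal R\mathcal Iv)_{\omega,t}=(\mathcal RZ,\mathcal K_hv)_{\omega,t}.
\]
\end{itemize}
Solving $(I+\epsilon\mathcal B)(\mathcal Iu+Z)=\mathcal Ig$ for $Z=-\epsilon(\mathcal D_{\mu\mu}^\epsilon)^{-1}\mathcal K_hu$ and substituting into the macro equation yields the exact identity
\[
m_t\bigl((\mathsf H_h^\epsilon)^{-1}u-u,v\bigr)=\epsilon j_h(u,v)-\epsilon^2\bigl(\mathcal R(\mathcal D_{\mu\mu}^\epsilon)^{-1}\mathcal K_hu,\mathcal K_hv\bigr)_{\omega,t}.
\]
This is the step I expect to need the most care. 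The delicate point is keeping track of which inner product each adjoint is taken in, so that the reversal $\mathcal R$ lands on the correct factor.

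\textbf{Step 3: expansion and remainder.} Insert this identity into \eqref{eq:proof_normalised_exact_form}; the $\epsilon m_a$ term is carried along unchanged. Then use the resolvent identity
\[
(\mathcal D_{\mu\mu}^\epsilon)^{-1}=I-\epsilon(\mathcal D_{\mu\mu}^\epsilon)^{-1}\mathcal B_{\mu\mu}.
\]
The $I$ term produces $-\epsilon(\mathcal R\mathcal K_hu,\mathcal K_hv)_{\omega,t}$, which combines with $\epsilon m_a$ to give $\epsilon\ell_h$. The second term produces exactly $\epsilon^2r_{h,\epsilon}$, which establishes \eqref{eq:proof_exact_correction_expansion}.

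For \eqref{eq:proof_remainder_bound}, apply Cauchy--Schwarz in $(\cdot,\cdot)_{\omega,t}$ and bound each factor:
\begin{itemize}
\item $\mathcal R$ is an isometry;
\item $(\mathcal D_{\mu\mu}^\epsilon)^{-1}$ has norm at most $1$ by \eqref{eq:proof_micro_resolvent_bound};
\item $\mathcal B_{\mu\mu}=\mathcal Q_0\mathcal B\mathcal Q_0$ has norm at most $\Norm{\mathcal B}\le C_B\kappa_{h,p}$, because $\mathcal Q_0$ is an orthogonal projection, combined with Lemma~\ref{lem:proof_uniform_streaming}.
\end{itemize}
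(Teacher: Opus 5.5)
Your proposal is correct and follows the paper's proof essentially step for step. You eliminate the mean-zero component via $\mathcal D_{\mu\mu}^\epsilon Z=-\epsilon\mathcal K_hu$, and you identify the macro--micro coupling through the reversal adjoint; your identity $m_t(\mathcal A\mathcal B Z,v)=(\mathcal R Z,\mathcal K_hv)_{\omega,t}$ is the weak form of the paper's $\mathcal A\mathcal B\mathcal Q_0=\mathcal K_h^*\mathcal R$. You then apply the resolvent identity together with $\|(\mathcal D_{\mu\mu}^\epsilon)^{-1}\|\le 1$ and $\|\mathcal B_{\mu\mu}\|\le C_B\kappa_{h,p}$, exactly as the paper does, the only cosmetic difference being that you write $\mathsf H_h^\epsilon=\mathcal A(I+\epsilon\mathcal B)^{-1}\mathcal I$ explicitly where the paper leaves it implicit.
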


\begin{proof}
Let
$g_u^\epsilon=(\mathsf H_h^\epsilon)^{-1}u$, and collect the
directional solutions
$\widehat\Psi_{m,h}^\epsilon(g_u^\epsilon)$ into
$U\in\mathbb V_h$.  Equations
\eqref{eq:proof_normalised_response} and
\eqref{eq:proof_normalised_scalar_response} give
\[
\qp{I+\epsilon\mathcal B}U
=
\mathcal I g_u^\epsilon,
\qquad
\mathcal AU=u.
\]
Write $U=\mathcal Iu+Z$ with $Z\in\mathbb V_h^0$.  Applying
$\mathcal Q_0$ gives
\[
\mathcal D_{\mu\mu}^\epsilon Z
=
-\epsilon\mathcal K_hu,
\qquad
Z
=
-\epsilon
\qp{\mathcal D_{\mu\mu}^\epsilon}^{-1}
\mathcal K_hu.
\]
Applying $\mathcal A$ to the same angular equation gives
\begin{equation}
\label{eq:proof_scalar_schur_identity}
g_u^\epsilon
=
u
+
\epsilon\mathcal A\mathcal B\mathcal Iu
-
\epsilon^2
\mathcal A\mathcal B\mathcal Q_0
\qp{\mathcal D_{\mu\mu}^\epsilon}^{-1}
\mathcal K_hu.
\end{equation}
By \eqref{eq:average_transport_identity} and the definition of
$\mathcal B$,
\[
m_t\qp{\mathcal A\mathcal B\mathcal Iu,v}
=
j_h(u,v).
\]
Furthermore, \eqref{eq:proof_B_reversal_adjoint} and the commutation of
$\mathcal R$ with $\mathcal Q_0$ give
\begin{equation}
\label{eq:proof_off_diagonal_adjoint}
\mathcal A\mathcal B\mathcal Q_0
=
\mathcal K_h^*\mathcal R.
\end{equation}
Substituting \eqref{eq:proof_scalar_schur_identity} into
\eqref{eq:proof_normalised_exact_form} therefore yields the exact
identity
\begin{equation}
\label{eq:proof_exact_schur_form}
a_{{\rm ex},h}^\epsilon(u,v)
=
j_h(u,v)
+
\epsilon m_a(u,v)
-
\epsilon
\qp{
\mathcal R
\qp{\mathcal D_{\mu\mu}^\epsilon}^{-1}
\mathcal K_hu,
\mathcal K_hv
}_{\omega,t}.
\end{equation}
The resolvent identity
\[
\qp{\mathcal D_{\mu\mu}^\epsilon}^{-1}
=
I
-
\epsilon
\qp{\mathcal D_{\mu\mu}^\epsilon}^{-1}
\mathcal B_{\mu\mu}
\]
now proves \eqref{eq:proof_exact_correction_expansion} and
\eqref{eq:proof_exact_remainder}.  Finally,
\eqref{eq:proof_B_uniform_bound} implies
\[
\Norm{\mathcal B_{\mu\mu}}
\le
C_B\kappa_{h,p}.
\]
Combining this with \eqref{eq:proof_micro_resolvent_bound} proves
\eqref{eq:proof_remainder_bound}.
\end{proof}

\subsection{Jump lifting and second-order matching}

Define the broken angular gradient
\begin{equation}
\label{eq:proof_angular_gradient}
\qp{\mathcal C_hu}_m
:=
\overline\sigma_t^{-1}
\vec \omega_m\cdot\nabla_hu.
\end{equation}
The first angular moment in
\eqref{eq:discrete_angular_moments} gives
$\mathcal C_hu\in\mathbb V_h^0$, and the second moment gives
\begin{equation}
\label{eq:proof_angular_gradient_energy}
\qp{\mathcal C_hu,\mathcal C_hv}_{\omega,t}
=
\overline D
\sum_K\qp{\nabla u,\nabla v}_K.
\end{equation}

For $u\in V_h$, define the vector lifting
$\mathcal L_hu\in\mathbb V_h$ by
\begin{align}
\label{eq:proof_lifting_definition}
\qp{\mathcal L_hu,V}_{\omega,t}
:={}&
\sum_{m,e}w_m
\left\langle
\qp{\vec \omega_m\cdot\vec  n_e}
\jump{u},
\avg{V_m}
\right\rangle_e
\nonumber\\
&+
\sum_{m,e}w_m
\left\langle
\left(
\beta_e
-
\frac12
\left|
\vec \omega_m\cdot\vec  n_e
\right|
\right)
\jump{u},
\jump{V_m}
\right\rangle_e
\end{align}
for every $V\in\mathbb V_h$. Taking $V=\mathcal Iz$, using the first
angular moment, and recalling the definition of $\beta_e$ shows that
\[
\mathcal A\mathcal L_hu=0.
\]
Hence $\mathcal L_hu\in\mathbb V_h^0$. On a physical boundary face,
the zero exterior convention gives
\[
\jump{u}=u,
\qquad
\avg{V_m}=\frac12V_m,
\qquad
\jump{V_m}=V_m.
\]
Hence the boundary part of the lifting has coefficient
\[
\frac12
\qp{\vec \omega_m\cdot\vec  n_e}
+
\beta_e
-
\frac12
\left|
\vec \omega_m\cdot\vec  n_e
\right|
=
\beta_e
-
\qp{-\vec \omega_m\cdot\vec  n_e}^{+}.
\]
Its angular average vanishes by central pairing, so
$\mathcal A\mathcal L_hu=0$ continues to hold.

Elementwise integration by parts, together with
\eqref{eq:average_transport_identity}, now gives the exact identity
\begin{equation}
\label{eq:proof_K_gradient_lifting}
\mathcal K_hu
=
\mathcal C_hu-\mathcal L_hu.
\end{equation}

\begin{lemma}[Whole-boundary lifting estimate]
\label{lem:proof_lifting_bound}
There is a constant $C_L>0$, independent of $h$, $p$, and the number
of element faces, such that
\begin{equation}
\label{eq:proof_lifting_bound}
\Norm{\mathcal L_hu}_{\omega,t}^2
\le
C_L\kappa_{h,p}j_h(u,u)
\qquad
\forall u\in V_h.
\end{equation}
Consequently,
\begin{equation}
\label{eq:proof_K_energy_bound}
\Norm{\mathcal K_hu}_{\omega,t}^2
\le
2\overline D\sum_K\Norm{\nabla u}_{0,K}^2
+
2C_L\kappa_{h,p}j_h(u,u).
\end{equation}
\end{lemma}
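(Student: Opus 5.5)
The lifting estimate follows by duality. Let $L:=\mathcal L_hu\in\mathbb V_h^0$ and test \eqref{eq:proof_lifting_definition} with $V=L$, so that
\[
\Norm{L}_{\omega,t}^2=(\mathcal L_hu,L)_{\omega,t}.
\]
The plan is to bound the right-hand side by $C\,(\kappa_{h,p}\,j_h(u,u))^{1/2}\Norm{L}_{\omega,t}$ and then divide by $\Norm{L}_{\omega,t}$.

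On each face the two coefficient functions are $\vec\omega_m\cdot\vec n_e$ and $\beta_e-\tfrac12|\vec\omega_m\cdot\vec n_e|$. Both are bounded by $1$ in absolute value, using $\beta_e\le\tfrac12$ from \eqref{eq:beta_bounds}. For each $m$ and $e$, Cauchy--Schwarz on the face gives a bound by $\Norm{\jump{u}}_{0,e}$ times $\Norm{\avg{L_m}}_{0,e}+\Norm{\jump{L_m}}_{0,e}$. On a boundary face we use the zero-exterior convention, so both traces reduce to multiples of $L_m^+$.

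Next we sum with the weights $w_m$ and over $e$, applying Cauchy--Schwarz in $(m,e)$:
\[
|(\mathcal L_hu,L)_{\omega,t}|
\le
\Bigl(\sum_e\Norm{\jump{u}}_{0,e}^2\Bigr)^{1/2}
\Bigl(\sum_m w_m\sum_e 2\bigl(\Norm{\avg{L_m}}_{0,e}^2+\Norm{\jump{L_m}}_{0,e}^2\bigr)\Bigr)^{1/2}.
\]
This uses $\sum_m w_m=1$. For the first factor, the lower bound $\beta_e\ge 1/(2d)$ gives $\sum_e\Norm{\jump{u}}_{0,e}^2\le 2d\,j_h(u,u)$. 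This is the only place the quadrature enters.

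For the second factor we argue exactly as in the proof of Lemma~\ref{lem:proof_uniform_streaming}. Each face term is bounded by a fixed multiple of the squared traces from the adjacent elements. Reorganising the sum elementwise gives at most $C\sum_K\Norm{L_m}_{0,\partial K}^2$, so no face count appears. The whole-boundary trace estimate \eqref{eq:whole_boundary_trace_inverse} then gives
\[
\sum_K\Norm{L_m}_{0,\partial K}^2\le C_{\rm tr}p^2\max_K h_K^{-1}\Norm{L_m}_{0,\Omega}^2=C_{\rm tr}\kappa_{h,p}\Norm{L_m}_t^2 .
\]
The second factor is therefore at most $C\kappa_{h,p}^{1/2}\Norm{L}_{\omega,t}$. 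Combining the two factors yields \eqref{eq:proof_lifting_bound} with
\[
C_L=C\,d\,C_{\rm tr},
\]
and dividing by $\Norm{L}_{\omega,t}$ is harmless since the estimate is trivial when $L=0$.

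For \eqref{eq:proof_K_energy_bound}, use the identity \eqref{eq:proof_K_gradient_lifting}, $\mathcal K_hu=\mathcal C_hu-\mathcal L_hu$. The elementary inequality $\Norm{a-b}^2\le 2\Norm{a}^2+2\Norm{b}^2$ gives two terms. The gradient term is evaluated by \eqref{eq:proof_angular_gradient_energy}, and the lifting term by the bound just proved.

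The main obstacle is making sure the weighted face sums are controlled by whole-element boundary norms. This needs each face to be counted at most twice, which holds in the face-to-face setting, so that no per-face constant or face count enters. Some care is also needed with the boundary faces, where the lifting coefficient changes form; there only the boundedness of that coefficient is needed.
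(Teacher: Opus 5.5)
Your proof is correct and follows the paper's argument: a duality pairing, Cauchy--Schwarz over $(m,e)$ with $\beta_e\ge 1/(2d)$ to recover $j_h(u,u)$, regrouping the face traces into whole-element boundary norms, the whole-boundary trace inverse estimate, and then the splitting $\mathcal K_hu=\mathcal C_hu-\mathcal L_hu$. The only cosmetic differences are that you test with $V=\mathcal L_hu$ instead of taking a supremum over $V$, and you use an unweighted Cauchy--Schwarz where the paper uses a $\beta_e$-weighted one; both rest on the same bound $\beta_e\ge 1/(2d)$ and are equivalent.
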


\begin{proof}
Let $V\in\mathbb V_h$. By
\eqref{eq:proof_lifting_definition},
\begin{align*}
\qp{\mathcal L_hu,V}_{\omega,t}
={}&
\sum_{m,e}w_m
\left\langle
\qp{\vec \omega_m\cdot\vec  n_e}
\jump{u},
\avg{V_m}
\right\rangle_e
\\
&+
\sum_{m,e}w_m
\left\langle
\left(
\beta_e
-
\frac12
\left|
\vec \omega_m\cdot\vec  n_e
\right|
\right)
\jump{u},
\jump{V_m}
\right\rangle_e.
\end{align*}
For the first term, weighted Cauchy--Schwarz and
$\beta_e\ge1/(2d)$ give
\begin{align*}
&\left|
\sum_{m,e}w_m
\left\langle
\qp{\vec \omega_m\cdot\vec  n_e}
\jump{u},
\avg{V_m}
\right\rangle_e
\right|
\\
&\qquad\le
j_h(u,u)^{1/2}
\left(
\sum_{m,e}w_m
\frac{
\qp{\vec \omega_m\cdot\vec  n_e}^2
}{
\beta_e
}
\Norm{\avg{V_m}}_{0,e}^2
\right)^{1/2}
\\
&\qquad\le
C_d\,j_h(u,u)^{1/2}
\left(
\sum_{m,K}w_m
\Norm{V_m}_{0,\partial K}^2
\right)^{1/2}.
\end{align*}
Here
\[
\frac{
\qp{\vec \omega_m\cdot\vec  n_e}^2
}{
\beta_e
}
\le2d.
\]

For the second term, the bounds on $\beta_e$ imply
\[
\frac{
\left|
\beta_e
-
\frac12
\left|
\vec \omega_m\cdot\vec  n_e
\right|
\right|^2
}{
\beta_e
}
\le
\frac d2.
\]
The same argument therefore gives
\begin{align*}
&\left|
\sum_{m,e}w_m
\left\langle
\left(
\beta_e
-
\frac12
\left|
\vec \omega_m\cdot\vec  n_e
\right|
\right)
\jump{u},
\jump{V_m}
\right\rangle_e
\right|
\\
&\qquad\le
C_d\,j_h(u,u)^{1/2}
\left(
\sum_{m,K}w_m
\Norm{V_m}_{0,\partial K}^2
\right)^{1/2}.
\end{align*}
The whole-boundary trace estimate and
\[
p^2h_K^{-1}
\le
\overline\sigma_t\kappa_{h,p}
\]
then imply
\[
\left|
\qp{\mathcal L_hu,V}_{\omega,t}
\right|
\le
C\kappa_{h,p}^{1/2}
j_h(u,u)^{1/2}
\Norm{V}_{\omega,t}.
\]
Taking the supremum over $V$ proves
\eqref{eq:proof_lifting_bound}. Equation
\eqref{eq:proof_K_energy_bound} follows from
\eqref{eq:proof_K_gradient_lifting},
\eqref{eq:proof_angular_gradient_energy}, and
$\Norm{X-Y}^2\le2\Norm{X}^2+2\Norm{Y}^2$.
\end{proof}

When the transport floor is active, define
\begin{align}
\label{eq:proof_scaled_diffusion_form}
d_h(u,v)
:={}&
\sum_K
\left[
\qp{\overline D\nabla u,\nabla v}_K
+
\qp{\overline\sigma_a u,v}_K
\right]
\nonumber\\
&-
\sum_e
\left\langle
\avg{\overline D\nabla u\cdot\vec  n_e},
\jump{v}
\right\rangle_e
\nonumber\\
&-
\sum_e
\left\langle
\avg{\overline D\nabla v\cdot\vec  n_e},
\jump{u}
\right\rangle_e.
\end{align}
Then
\begin{equation}
\label{eq:proof_mip_decomposition}
a_{{\rm MIP},h}^\epsilon(u,v)
=
j_h(u,v)+\epsilon d_h(u,v).
\end{equation}

On $e\in\mathcal E_h^\partial$, the two flux terms in $d_h$ are
\[
-\frac12
\left\langle
\overline D\nabla u\cdot\vec  n_e,
v
\right\rangle_e
-
\frac12
\left\langle
\overline D\nabla v\cdot\vec  n_e,
u
\right\rangle_e.
\]
These are exactly the boundary terms produced by the two
gradient--lifting pairings in the proof of
Lemma~\ref{lem:proof_second_order_matching}.

\begin{lemma}[Exact second-order matching identity]
\label{lem:proof_second_order_matching}
For every $u,v\in V_h$,
\begin{equation}
\label{eq:proof_second_order_matching}
\ell_h(u,v)-d_h(u,v)
=
-\qp{\mathcal R\mathcal L_hu,\mathcal L_hv}_{\omega,t}.
\end{equation}
\end{lemma}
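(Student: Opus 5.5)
Substitute the decomposition \eqref{eq:proof_K_gradient_lifting}, $\mathcal K_h=\mathcal C_h-\mathcal L_h$, into the definition \eqref{eq:proof_second_order_transport_form} of $\ell_h$ and expand bilinearly:
\[
\qp{\mathcal R\mathcal K_hu,\mathcal K_hv}_{\omega,t}
=
\qp{\mathcal R\mathcal C_hu,\mathcal C_hv}_{\omega,t}
-\qp{\mathcal R\mathcal C_hu,\mathcal L_hv}_{\omega,t}
-\qp{\mathcal R\mathcal L_hu,\mathcal C_hv}_{\omega,t}
+\qp{\mathcal R\mathcal L_hu,\mathcal L_hv}_{\omega,t}.
\]
The last term produces exactly the right-hand side of \eqref{eq:proof_second_order_matching} with the stated sign. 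It therefore suffices to show that the other three terms reproduce $-d_h(u,v)+m_a(u,v)$, i.e. the gradient and consistency/symmetry flux terms of $d_h$ in \eqref{eq:proof_scaled_diffusion_form}.

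\emph{Gradient--gradient term.} By definition $(\mathcal R\mathcal C_hu)_m=\overline\sigma_t^{-1}\vec\omega_{\overline m}\cdot\nabla_hu=-(\mathcal C_hu)_m$. Hence $\mathcal R\mathcal C_h=-\mathcal C_h$, and \eqref{eq:proof_angular_gradient_energy} gives
\[
\qp{\mathcal R\mathcal C_hu,\mathcal C_hv}_{\omega,t}=-\overline D\sum_K(\nabla u,\nabla v)_K .
\]
Together with $m_a$, the quantity $m_a-(\text{this term})$ yields the volume part of $d_h$.

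\emph{Cross terms.} Since $\mathcal R$ is a self-adjoint isometry, $(\mathcal R\mathcal C_hu,\mathcal L_hv)_{\omega,t}=(\mathcal L_hv,\mathcal R\mathcal C_hu)_{\omega,t}=-(\mathcal L_hv,\mathcal C_hu)_{\omega,t}$, and similarly for the other cross term. So I must evaluate $(\mathcal L_hv,\mathcal C_hu)_{\omega,t}$ using \eqref{eq:proof_lifting_definition} with $V=\mathcal C_hu$, i.e. $V_m=\overline\sigma_t^{-1}\vec\omega_m\cdot\nabla_hu$.

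The first sum becomes
\[
\sum_e\Bigl\langle \jump{v},\overline\sigma_t^{-1}\sum_mw_m(\vec\omega_m\cdot\vec n_e)\,\vec\omega_m\cdot\avg{\nabla u}\Bigr\rangle_e ,
\]
and the second moment reduces it to $\sum_e\langle\jump{v},\avg{\overline D\nabla u\cdot\vec n_e}\rangle_e$. Here I need to note that the weight $w_m$ together with the factor $\overline\sigma_t$ from $m_t$ in the inner product cancels $\overline\sigma_t^{-1}$, leaving $\overline D=1/(d\overline\sigma_t)$.

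The second sum involves $\sum_mw_m(\beta_e-\tfrac12|\vec\omega_m\cdot\vec n_e|)\,\vec\omega_m$. The integrand is odd under $m\mapsto\overline m$ (the coefficient is even, while $\vec\omega_m$ is odd), so central pairing kills it on interior faces.

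\emph{Boundary faces.} These are the main obstacle. There $\jump{V_m}=V_m$ and $\avg{V_m}=V_m/2$, so the combined coefficient is $\beta_e-(-\vec\omega_m\cdot\vec n_e)^+$ as computed before the lemma. I will split it into an even part $\beta_e-\tfrac12|\vec\omega_m\cdot\vec n_e|$ and an odd part $\tfrac12\vec\omega_m\cdot\vec n_e$. The even part again vanishes against $\vec\omega_m$ by pairing. The odd part gives $\tfrac12\langle \overline D\nabla u\cdot\vec n_e, v\rangle_e$, which matches the vacuum boundary terms of $d_h$ noted before the lemma.

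Combining, $-(\text{cross terms})=-\sum_e\langle\avg{\overline D\nabla u\cdot\vec n_e},\jump v\rangle-\sum_e\langle\avg{\overline D\nabla v\cdot\vec n_e},\jump u\rangle$. Collecting everything gives $\ell_h-d_h=-(\mathcal R\mathcal L_hu,\mathcal L_hv)_{\omega,t}$.

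I expect the only real care to be needed in the sign and normalisation bookkeeping, namely the $\overline\sigma_t$ factors from $m_t$ and the signs introduced by $\mathcal R$.
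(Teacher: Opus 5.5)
Your proposal is correct and follows essentially the paper's own argument: substitute $\mathcal K_h=\mathcal C_h-\mathcal L_h$ into $\ell_h$, use $\mathcal R\mathcal C_h=-\mathcal C_h$ together with the gradient energy identity, and evaluate the gradient--lifting pairings from the lifting definition with $V=\mathcal C_hu$ using the first two angular moments and central pairing. One bookkeeping correction concerns the pairing $(\mathcal L_hv,\mathcal C_hu)_{\omega,t}$: no $m_t$ weight appears there, because the definition of $\mathcal L_h$ already absorbs it, so the factor $\overline\sigma_t^{-1}$ from $\mathcal C_h$ is not cancelled but combines with the $1/d$ from the second moment to give $\overline D$. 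Also, your separate boundary-face treatment is just the interior computation read with the zero-exterior conventions (average $V_m/2$, jump $V_m$), so it is redundant.
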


\begin{proof}
Taking $V=\mathcal C_hu$ in
\eqref{eq:proof_lifting_definition} and using the first two angular
moments gives
\begin{equation}
\label{eq:proof_gradient_lifting_pairing}
\qp{\mathcal C_hu,\mathcal L_hv}_{\omega,t}
=
\sum_e
\left\langle
\avg{\overline D\nabla u\cdot\vec  n_e},
\jump{v}
\right\rangle_e.
\end{equation}
Indeed, the first term in the lifting produces the right-hand side.
The contribution containing $\beta_e$ vanishes by the first angular
moment, while the remaining absolute-value contribution vanishes by
central pairing.

Since
\[
\mathcal R\mathcal C_hv=-\mathcal C_hv,
\]
self-adjointness of $\mathcal R$ and
\eqref{eq:proof_gradient_lifting_pairing} give
\begin{equation}
\label{eq:proof_reversed_lifting_pairing}
\qp{\mathcal R\mathcal L_hu,\mathcal C_hv}_{\omega,t}
=
-
\sum_e
\left\langle
\avg{\overline D\nabla v\cdot\vec  n_e},
\jump{u}
\right\rangle_e.
\end{equation}

Substituting
\eqref{eq:proof_K_gradient_lifting} into
\eqref{eq:proof_second_order_transport_form}, and using
\eqref{eq:proof_angular_gradient_energy},
\eqref{eq:proof_gradient_lifting_pairing}, and
\eqref{eq:proof_reversed_lifting_pairing}, yields
\begin{align*}
\ell_h(u,v)
={}&
m_a(u,v)
+
\overline D
\sum_K\qp{\nabla u,\nabla v}_K
\\
&-
\sum_e
\left\langle
\avg{\overline D\nabla u\cdot\vec  n_e},
\jump{v}
\right\rangle_e
\\
&-
\sum_e
\left\langle
\avg{\overline D\nabla v\cdot\vec  n_e},
\jump{u}
\right\rangle_e
\\
&-
\qp{\mathcal R\mathcal L_hu,\mathcal L_hv}_{\omega,t}.
\end{align*}
The first four terms are precisely $d_h(u,v)$, proving
\eqref{eq:proof_second_order_matching}.
\end{proof}

\subsection{Relative comparison of the correction forms}

\begin{lemma}[Mesh- and degree-uniform relative form estimate]
\label{lem:proof_relative_form_estimate}
There are constants $C_{\rm rel}>0$ and
$\mathrm{Kn}_{\rm rel}>0$, independent of $h$, $\epsilon$, $p$, and
the number of element faces, such that, if
\[
\mathrm{Kn}_{h,p}^\epsilon
\le
\min\left\{
\mathrm{Kn}_{\rm floor},
\mathrm{Kn}_{\rm rel}
\right\},
\]
then
\begin{equation}
\label{eq:proof_relative_form_estimate}
\norm{
a_{{\rm ex},h}^\epsilon(u,v)
-a_{{\rm MIP},h}^\epsilon(u,v)
}
\le
C_{\rm rel}\mathrm{Kn}_{h,p}^\epsilon
\Norm{u}_{\rm MIP}
\Norm{v}_{\rm MIP}
\qquad
\forall u,v\in V_h.
\end{equation}
\end{lemma}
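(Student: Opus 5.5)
We combine the exact factorisation of Lemma~\ref{lem:proof_exact_factorisation} with the MIP decomposition \eqref{eq:proof_mip_decomposition}, valid because $\mathrm{Kn}_{h,p}^\epsilon\le\mathrm{Kn}_{\rm floor}$ activates the transport floor by Proposition~\ref{prop:mip_properties}. Subtracting and applying the matching identity of Lemma~\ref{lem:proof_second_order_matching}, the $j_h$ terms cancel exactly and we obtain
\[
a_{{\rm ex},h}^\epsilon(u,v)-a_{{\rm MIP},h}^\epsilon(u,v)
=
-\epsilon\qp{\mathcal R\mathcal L_hu,\mathcal L_hv}_{\omega,t}
+\epsilon^2r_{h,\epsilon}(u,v).
\]
The proof then reduces to bounding these two terms by $\mathrm{Kn}_{h,p}^\epsilon\Norm{u}_{\rm MIP}\Norm{v}_{\rm MIP}$. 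Throughout we use $\epsilon\kappa_{h,p}=\mathrm{Kn}_{h,p}^\epsilon$.

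For the lifting term, $\mathcal R$ is an isometry, so Cauchy--Schwarz and Lemma~\ref{lem:proof_lifting_bound} give
\[
\epsilon\norm{\qp{\mathcal R\mathcal L_hu,\mathcal L_hv}_{\omega,t}}
\le
C_L\,\epsilon\kappa_{h,p}\,j_h(u,u)^{1/2}j_h(v,v)^{1/2}.
\]
Since $\tau_e^\epsilon=\beta_e$, coercivity \eqref{eq:mip_coercivity} yields $j_h(v,v)\le(1-\vartheta)^{-1}\Norm{v}_{\rm MIP}^2$. This term is therefore bounded by $C_L(1-\vartheta)^{-1}\mathrm{Kn}_{h,p}^\epsilon\Norm{u}_{\rm MIP}\Norm{v}_{\rm MIP}$.

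For the remainder, \eqref{eq:proof_remainder_bound} gives
\[
\epsilon^2\norm{r_{h,\epsilon}(u,v)}\le C_B\,\epsilon\kappa_{h,p}\,\bigl(\epsilon^{1/2}\Norm{\mathcal K_hu}_{\omega,t}\bigr)\bigl(\epsilon^{1/2}\Norm{\mathcal K_hv}_{\omega,t}\bigr).
\]
By \eqref{eq:proof_K_energy_bound},
\[
\epsilon\Norm{\mathcal K_hu}_{\omega,t}^2\le 2\epsilon\overline D\sum_K\Norm{\nabla u}_{0,K}^2+2C_L\,\epsilon\kappa_{h,p}\,j_h(u,u).
\]
The first term is controlled by $(1-\vartheta)^{-1}\Norm{u}_{\rm MIP}^2$ via coercivity. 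The second is $2C_L\mathrm{Kn}_{h,p}^\epsilon j_h(u,u)$, which is bounded by a constant times $\Norm{u}_{\rm MIP}^2$ once $\mathrm{Kn}_{h,p}^\epsilon\le\mathrm{Kn}_{\rm rel}$ for any fixed $\mathrm{Kn}_{\rm rel}$, for instance $\mathrm{Kn}_{\rm rel}=1$. Hence $\epsilon\Norm{\mathcal K_hu}_{\omega,t}^2\le C\Norm{u}_{\rm MIP}^2$, and the remainder contributes $C_BC\,\mathrm{Kn}_{h,p}^\epsilon\Norm{u}_{\rm MIP}\Norm{v}_{\rm MIP}$. Adding the two bounds gives \eqref{eq:proof_relative_form_estimate} with $C_{\rm rel}$ depending only on $C_L$, $C_B$, $\vartheta$ and $d$. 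These constants are uniform in $h$, $p$, $\epsilon$ and face count because $C_L$ and $C_B$ come from whole-boundary estimates.

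The main care point is the powers of $\epsilon$. The remainder carries $\epsilon^2$, but $\Norm{\mathcal K_hu}^2$ is only $O(\epsilon^{-1})$ relative to the MIP energy, since the gradient part of the MIP norm is weighted by $\epsilon$. One $\epsilon$ must be paired with $\kappa_{h,p}$ and the other absorbed into the $\mathcal K_h$ bounds; getting this split right is what produces exactly one factor of $\mathrm{Kn}_{h,p}^\epsilon$. The resolvent bound \eqref{eq:proof_micro_resolvent_bound} being $\epsilon$-independent is essential here. Everything else is direct bookkeeping.
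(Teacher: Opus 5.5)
Your proposal is correct and follows the paper's proof essentially step for step. You subtract the floor-active decomposition $a_{{\rm MIP},h}^\epsilon=j_h+\epsilon d_h$ from the exact factorisation, control $\epsilon(\ell_h-d_h)$ through the matching identity and the lifting estimate, and control $\epsilon^2r_{h,\epsilon}$ through the remainder bound together with $\epsilon\Norm{\mathcal K_hv}_{\omega,t}^2\le C\Norm{v}_{\rm MIP}^2$ for $\mathrm{Kn}_{h,p}^\epsilon\le1$, all via the coercivity bound with $\tau_e^\epsilon=\beta_e$. The only differences are presentational: you write the defect explicitly as $-\epsilon(\mathcal R\mathcal L_hu,\mathcal L_hv)_{\omega,t}+\epsilon^2r_{h,\epsilon}(u,v)$ and track constants that the paper leaves implicit.
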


\begin{proof}
Choose $\mathrm{Kn}_{\rm rel}\le1$.  Since the transport floor is
active, \eqref{eq:mip_coercivity} gives
\begin{align}
\label{eq:proof_mip_energy_lower_bound}
\Norm{v}_{\rm MIP}^2
\ge{}&
(1-\vartheta)
\left[
\epsilon\overline D
\sum_K\Norm{\nabla v}_{0,K}^2
+j_h(v,v)
\right]
+
\epsilon m_a(v,v).
\end{align}
Equations \eqref{eq:proof_K_energy_bound} and
\eqref{eq:proof_kappa_h} imply
\begin{equation}
\label{eq:proof_scaled_K_bound}
\epsilon
\Norm{\mathcal K_hv}_{\omega,t}^2
\le
C_K\Norm{v}_{\rm MIP}^2,
\end{equation}
provided $\mathrm{Kn}_{h,p}^\epsilon\le1$.

Subtracting \eqref{eq:proof_mip_decomposition} from
\eqref{eq:proof_exact_correction_expansion} gives
\begin{equation}
\label{eq:proof_correction_defect_expansion}
a_{{\rm ex},h}^\epsilon(u,v)
-a_{{\rm MIP},h}^\epsilon(u,v)
=
\epsilon\qp{\ell_h-d_h}(u,v)
+
\epsilon^2r_{h,\epsilon}(u,v).
\end{equation}
The matching identity and lifting estimate give
\begin{align*}
\epsilon
\norm{\qp{\ell_h-d_h}(u,v)}
&\le
\epsilon C_L\kappa_{h,p}
j_h(u,u)^{1/2}j_h(v,v)^{1/2}
\\
&\le
C\mathrm{Kn}_{h,p}^\epsilon
\Norm{u}_{\rm MIP}\Norm{v}_{\rm MIP}.
\end{align*}
Likewise, the remainder estimate and
\eqref{eq:proof_scaled_K_bound} give
\begin{align*}
\epsilon^2
\norm{r_{h,\epsilon}(u,v)}
&\le
C_B\epsilon^2\kappa_{h,p}
\Norm{\mathcal K_hu}_{\omega,t}
\Norm{\mathcal K_hv}_{\omega,t}
\\
&\le
C\mathrm{Kn}_{h,p}^\epsilon
\Norm{u}_{\rm MIP}\Norm{v}_{\rm MIP}.
\end{align*}
Combining these bounds in
\eqref{eq:proof_correction_defect_expansion} proves
\eqref{eq:proof_relative_form_estimate}.
\end{proof}

\subsection{Proof of the main convergence result}

For $u\in V_h$, define $\mathsf P_h^\epsilon u\in V_h$ by
\begin{equation}
\label{eq:proof_correction_map}
a_{{\rm MIP},h}^\epsilon
\qp{\mathsf P_h^\epsilon u,v}
=
a_{{\rm ex},h}^\epsilon(u,v)
\qquad
\forall v\in V_h.
\end{equation}
Equation \eqref{eq:mip_correction_exact_rhs} gives
\[
\delta_h^{k+1}
=
\mathsf P_h^\epsilon e_h^{k+1/2},
\]
and hence
\begin{equation}
\label{eq:proof_dsa_factorisation}
\mathsf E_h^\epsilon
=
\qp{I-\mathsf P_h^\epsilon}
\mathsf G_h^\epsilon.
\end{equation}

\begin{proof}[Proof of Theorem~\ref{thm:main_si_dsa}]
Let $C_{\rm rel}$ and $\mathrm{Kn}_{\rm rel}$ be the constants in
Lemma~\ref{lem:proof_relative_form_estimate}.  Enlarge
$C_{\rm rel}$, if necessary, to a constant $C\ge C_{\rm rel}$ that
also covers the estimates below, and choose
\begin{equation}
\label{eq:proof_Kn0_choice}
\mathrm{Kn}_0
:=
\min\left\{
\mathrm{Kn}_{\rm floor},
\mathrm{Kn}_{\rm rel},
\frac{1}{2C}
\right\}.
\end{equation}

Proposition~\ref{prop:mip_properties} proves consistency, symmetry,
positive definiteness, and activation of the transport floor under
\eqref{eq:main_optically_thick_condition}.  Lemma
\ref{lem:proof_source_iteration} proves
\eqref{eq:main_si_rate}, and Lemma
\ref{lem:proof_relative_form_estimate} proves
\eqref{eq:main_form_comparison}.

It remains to convert the relative form estimate into the DSA
contraction bound. By the definition of $\mathrm{Kn}_0$,
\[
C\mathrm{Kn}_{h,p}^\epsilon
\le
\frac12.
\]
Taking $u=v$ in \eqref{eq:main_form_comparison} gives
\begin{equation}
\label{eq:proof_exact_mip_spectral_equivalence}
\left(
1-C\mathrm{Kn}_{h,p}^\epsilon
\right)
a_{{\rm MIP},h}^\epsilon(v,v)
\le
a_{{\rm ex},h}^\epsilon(v,v)
\le
\left(
1+C\mathrm{Kn}_{h,p}^\epsilon
\right)
a_{{\rm MIP},h}^\epsilon(v,v).
\end{equation}
The map $\mathsf P_h^\epsilon$ is self-adjoint in the MIP inner
product by \eqref{eq:proof_correction_map}. It is also self-adjoint in
the exact energy. Indeed,
\begin{align*}
a_{{\rm ex},h}^\epsilon
\qp{\mathsf P_h^\epsilon u,v}
&=
a_{{\rm MIP},h}^\epsilon
\qp{
\qp{\mathsf P_h^\epsilon}^2u,v
}
\\
&=
a_{{\rm MIP},h}^\epsilon
\qp{
\mathsf P_h^\epsilon u,
\mathsf P_h^\epsilon v
}
\\
&=
a_{{\rm ex},h}^\epsilon
\qp{u,\mathsf P_h^\epsilon v}.
\end{align*}

Since $\mathsf P_h^\epsilon$ is self-adjoint in the MIP inner
product, its eigenvalues are real. If
$\mathsf P_h^\epsilon v=\lambda v$ with $v\ne0$, then
\[
\lambda
=
\frac{
a_{{\rm ex},h}^\epsilon(v,v)
}{
a_{{\rm MIP},h}^\epsilon(v,v)
}.
\]
Therefore
\eqref{eq:proof_exact_mip_spectral_equivalence} implies
\[
\sigma\qp{\mathsf P_h^\epsilon}
\subset
\left[
1-C\mathrm{Kn}_{h,p}^\epsilon,
1+C\mathrm{Kn}_{h,p}^\epsilon
\right].
\]
Self-adjointness in the exact energy now gives
\begin{equation}
\label{eq:proof_I_minus_P_bound}
\Norm{I-\mathsf P_h^\epsilon}_{\rm ex}
\le
C\mathrm{Kn}_{h,p}^\epsilon.
\end{equation}
Combining \eqref{eq:proof_dsa_factorisation},
\eqref{eq:proof_I_minus_P_bound}, and
\eqref{eq:proof_si_exact_rate} proves
\[
\Norm{\mathsf E_h^\epsilon}_{\rm ex}
\le
C\mathrm{Kn}_{h,p}^\epsilon
\rho\qp{\mathsf G_h^\epsilon},
\]
which is \eqref{eq:main_dsa_rate}.

Finally,
$C\mathrm{Kn}_{h,p}^\epsilon\le1/2$ and
$\rho(\mathsf G_h^\epsilon)>0$ give
\[
\rho\qp{\mathsf E_h^\epsilon}
\le
\Norm{\mathsf E_h^\epsilon}_{\rm ex}
\le
\frac12
\rho\qp{\mathsf G_h^\epsilon}
<
\rho\qp{\mathsf G_h^\epsilon},
\]
which proves \eqref{eq:main_strict_comparison} and completes the
proof.
\end{proof}

\section{Numerical verification under vacuum inflow}
\label{sec:numerical_verification}

This section verifies the contraction estimates and
transport-correction comparison established in the preceding analysis.
The experiments consider homogeneous vacuum inflow and examine three
quantities.  The exact source-iteration contraction factor, the
accelerated MIP--DSA contraction factor, and the relative discrepancy
between the exact transport correction and the vacuum-matched MIP
correction.  More extensive computational studies of SIP--DSA and
MIP--DSA on bounded Voronoi meshes are reported in the companion paper
\cite{dsacomp}.

All computations are performed on $\Omega=(0,1)^2.$ For each ordinate
$\vec\omega_m$, homogeneous vacuum inflow is imposed on
\[
\Gamma_m^-
:=
\left\{
x\in\partial\Omega:
\vec\omega_m\cdot\vec n(x)<0
\right\}.
\]
Equivalently, the DG transport discretisation uses the zero exterior
trace on every physical boundary face.

We consider the diffusive scaling
\[
\overline\sigma_t=1,
\qquad
\overline\sigma_a=1,
\qquad
\sigma_t^\epsilon=\epsilon^{-1},
\qquad
\sigma_s^\epsilon=\epsilon^{-1}-\epsilon .
\]
The angular discretisation uses a fixed positive centrally paired
quadrature satisfying
\eqref{eq:discrete_angular_moments}.  Unless stated otherwise,
$N_\omega=16$ uniformly distributed directions are used with weights
$w_m=N_\omega^{-1}$.

Let $\{\varphi_i\}_{i=1}^{\dim V_h}$ denote a basis of the discrete
space.  The exact and MIP correction matrices are defined by
\begin{align}
\left(A_{{\rm ex},h}^{\epsilon}\right)_{ij}
&=
a_{{\rm ex},h}^{\epsilon}
(\varphi_j,\varphi_i),
&
\left(A_{{\rm MIP},h}^{\epsilon}\right)_{ij}
&=
a_{{\rm MIP},h}^{\epsilon}
(\varphi_j,\varphi_i).
\end{align}
The exact correction matrix is assembled from the normalised directional
response problems
\eqref{eq:proof_normalised_response} with homogeneous vacuum inflow,
followed by the construction
\eqref{eq:proof_normalised_scalar_response}
and
\eqref{eq:exact_correction_form_results}.
The MIP correction includes both interior and physical boundary
contributions from
\eqref{eq:vacuum_mip_boundary_contribution}.

The source-iteration and MIP--DSA error propagators are denoted by
$G_h^\epsilon$ and $E_h^\epsilon$, respectively.

%-----------------------------------------------------------------------
\subsection{Experiment 1: Vacuum contraction factors}
\label{sec:numerical_contraction_factors}
%-----------------------------------------------------------------------

The first experiment investigates the source-iteration and accelerated
contraction factors in the diffusive regime.  We fix a centroidal
Voronoi mesh with polynomial degree $p=2$ and vary
\[
  \epsilon\in\{2^{-j}:j=1,\ldots,12\}.
\]
For each value of $\epsilon$ we compute $\rho(G_h^\epsilon), \qquad
\rho(E_h^\epsilon), \qquad \|E_h^\epsilon\|_{\rm ex}, \qquad
\lambda_{\max}^{\epsilon} := \lambda_{\max}(\mathsf H_h^\epsilon).$

The exact operator norm is evaluated from the generalised eigenvalue
problem
\begin{equation}
\left(E_h^\epsilon\right)^{\mathsf T}
A_{{\rm ex},h}^{\epsilon}
E_h^\epsilon x
=
\lambda
A_{{\rm ex},h}^{\epsilon}x ,
\end{equation}
so that
\[
\|E_h^\epsilon\|_{\rm ex}
=
\lambda_{\max}^{1/2}.
\]

For the present coefficients, the scattering ratio is
\[
c_\epsilon
=
\frac{\sigma_s^\epsilon}{\sigma_t^\epsilon}
=
1-\epsilon^2 .
\]
The vacuum source-iteration identity gives
\[
\|G_h^\epsilon\|_{\rm ex}
=
\rho(G_h^\epsilon)
=
c_\epsilon\lambda_{\max}^{\epsilon}
<
c_\epsilon .
\]
Hence the numerical results compare the computed source-iteration
factor against both the exact transport-dependent quantity
$c_\epsilon\lambda_{\max}^{\epsilon}$ and the coefficient-only bound
$c_\epsilon$.

To measure the improvement obtained from diffusion acceleration, we also
record
\begin{equation}
R_{\rm op}^{\epsilon}
:=
\frac{\|E_h^\epsilon\|_{\rm ex}}
{\rho(G_h^\epsilon)},
\qquad
R_{\rm sp}^{\epsilon}
:=
\frac{\rho(E_h^\epsilon)}
{\rho(G_h^\epsilon)} .
\end{equation}
Theorem~\ref{thm:main_si_dsa} predicts that these quantities decay
linearly with the effective cell Knudsen number.

\begin{figure}[h!]
\centering
\includegraphics[width=0.48\textwidth]{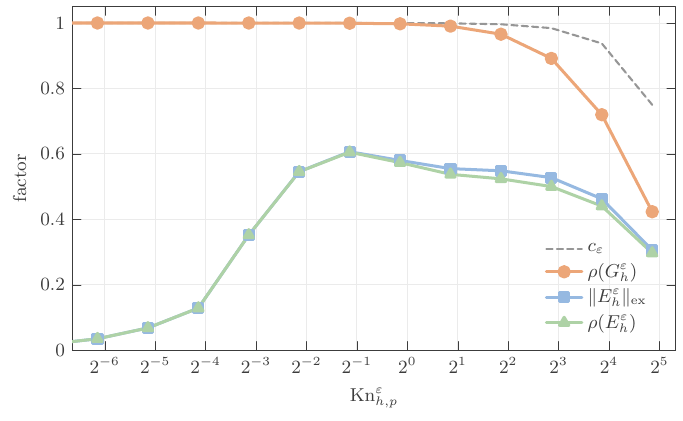}
\hfill
\includegraphics[width=0.48\textwidth]{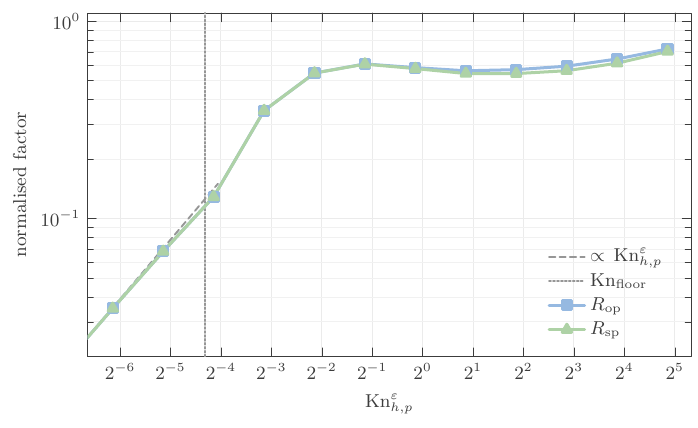}
\caption{
Experiment~1 under homogeneous vacuum inflow.
Left: source-iteration and MIP--DSA contraction factors together with
the exact source-iteration prediction
$c_\epsilon\lambda_{\max}^{\epsilon}$ and the upper bound $c_\epsilon$.
Right: normalised accelerated contraction factors
$R_{\rm op}^{\epsilon}$ and $R_{\rm sp}^{\epsilon}$.
}
\label{fig:numerical_contraction_factors}
\end{figure}

The results in Figure \ref{fig:numerical_contraction_factors} confirm
the exact source-iteration identity to numerical precision and show
the strict improvement obtained from MIP--DSA.  In particular, the
normalised accelerated factors decrease with the effective Knudsen
number, consistent with the estimate of Theorem~\ref{thm:main_si_dsa}.
The spectral radius of the accelerated iteration remains below the
corresponding exact operator norm, as expected from the energy
estimate.

%-----------------------------------------------------------------------
\subsection{Experiment 2: Relative vacuum correction-form estimate}
\label{sec:numerical_form_comparison}
%-----------------------------------------------------------------------

The second experiment examines directly the relative discrepancy between
the exact transport correction and the vacuum-matched MIP correction.
For each mesh, polynomial degree, and value of $\epsilon$, we solve
\begin{equation}
\left(
A_{{\rm ex},h}^{\epsilon}
-
A_{{\rm MIP},h}^{\epsilon}
\right)x
=
\lambda
A_{{\rm MIP},h}^{\epsilon}x
\end{equation}
and define
\begin{equation}
\delta_{h,p}^{\epsilon}
:=
\max |\lambda|.
\end{equation}
Since $A_{{\rm MIP},h}^{\epsilon}$ is positive definite, this quantity
is the relative form discrepancy
\[
\delta_{h,p}^{\epsilon}
=
\sup_{u,v\in V_h\setminus\{0\}}
\frac{
\left|
a_{{\rm ex},h}^{\epsilon}(u,v)
-
a_{{\rm MIP},h}^{\epsilon}(u,v)
\right|
}{
\|u\|_{\rm MIP}\|v\|_{\rm MIP}
}.
\]

We consider two mesh families on $\Omega$, uniform Cartesian meshes
and centroidal Voronoi meshes clipped to the domain boundary.
Polynomial degrees $p\in\{1,2,3,4\}$ are tested, together with several
refinement levels and values of $\epsilon$ chosen to provide
comparable ranges of the effective Knudsen number
\[
\mathrm{Kn}_{h,p}^{\epsilon}
=
\max_{K\in\mathcal T_h}
\frac{p^2}{\sigma_t^\epsilon h_K}.
\]

To identify the asymptotic regime, we also record the facewise floor
indicator
\[
\chi_{h,p}^{\epsilon}
:=
\max_{e\in\mathcal E_h}
\frac{\epsilon\tau_{{\rm SIP},e}}
{\beta_e}.
\]
The transport floor is active when
$\chi_{h,p}^{\epsilon}\le1$, including all physical boundary faces.

\begin{figure}[h!]
\centering
\includegraphics[width=0.48\textwidth]{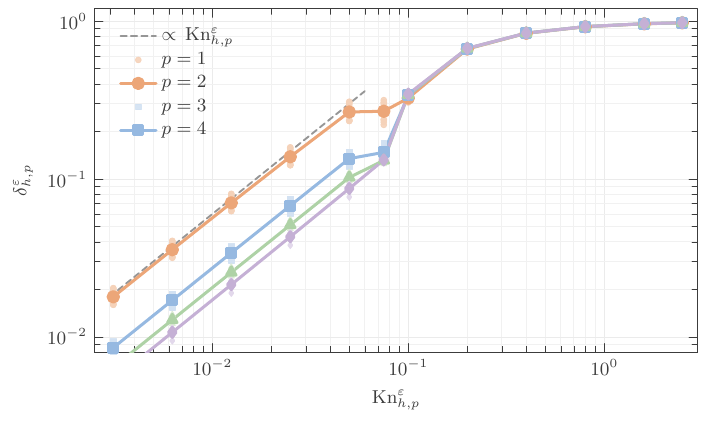}
\hfill
\includegraphics[width=0.48\textwidth]{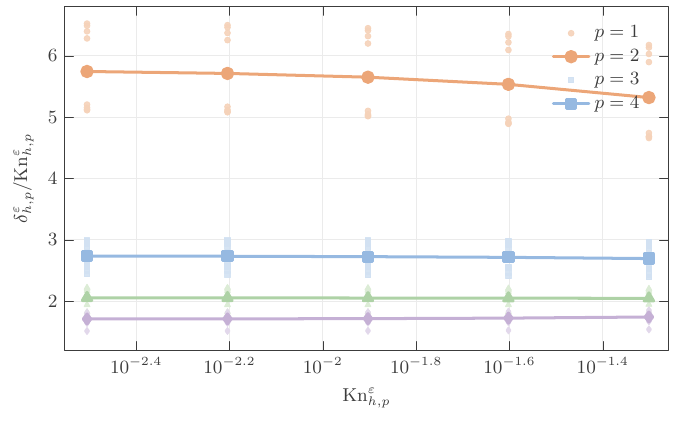}
\caption{
Experiment~2 under homogeneous vacuum inflow.
Left:
relative correction-form discrepancy
$\delta_{h,p}^{\epsilon}$ against the effective Knudsen number for
polynomial degrees $p=1,2,3,4$.
Right:
the scaled quantity
$\delta_{h,p}^{\epsilon}/\mathrm{Kn}_{h,p}^{\epsilon}$.
}
\label{fig:numerical_relative_form_estimate}
\end{figure}

The numerical results in Figure
\ref{fig:numerical_relative_form_estimate} show the predicted linear
scaling of the relative correction-form discrepancy with the effective
Knudsen number.  The scaled quantity remains bounded across polynomial
degrees and mesh families, with comparable behaviour for Cartesian and
Voronoi meshes.  The onset of the linear regime coincides with
activation of the transport floor, including the physical boundary
contributions.

For completeness, we also examine refinement dependence at fixed
small Knudsen number.  The corresponding results for Cartesian and
Voronoi meshes are shown in
Figure~\ref{fig:numerical_refinement_dependence}.

\begin{figure}[h!]
\centering
\includegraphics[width=0.48\textwidth]{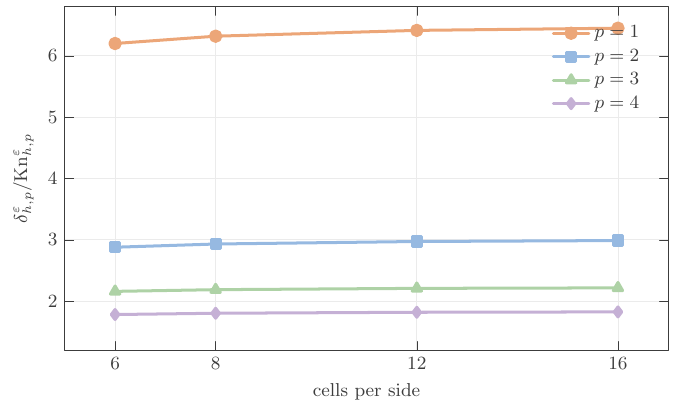}
\hfill
\includegraphics[width=0.48\textwidth]{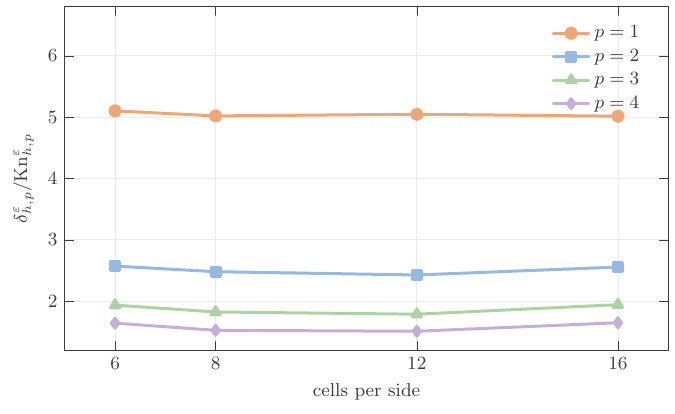}
\caption{
Refinement study for the scaled correction-form discrepancy at fixed
small Knudsen number.
Left: Cartesian meshes.
Right: centroidal Voronoi meshes.
The curves show the dependence of
$\delta_{h,p}^{\epsilon}/\mathrm{Kn}_{h,p}^{\epsilon}$
for polynomial degrees $p=1,2,3,4$.
}
\label{fig:numerical_refinement_dependence}
\end{figure}

\section{Conclusion}

We constructed the exact scalar correction induced by upwind DG
transport solves and compared it directly with a transport-matched MIP
correction.  Homogeneous vacuum inflow is incorporated through a zero
exterior trace on physical boundary faces.  This produces an averaged
boundary leakage term in the exact correction and the corresponding
half-weighted symmetric flux terms in the MIP form.

The macro--micro and lifting identities yield a relative form estimate
controlled by $\mathrm{Kn}_{h,p}^\epsilon$, and hence a contraction
factor for the complete accelerated iteration that is uniform in mesh
size, polynomial degree, and face count under the stated
whole-boundary estimates.  Under vacuum inflow, the exact
source-iteration factor is $c_\epsilon\lambda_{\max}(\mathsf
H_h^\epsilon)$, with the strict reduction arising from boundary
leakage, while the relative MIP--DSA acceleration estimate remains
controlled by the effective cell Knudsen number.

The numerical experiments use the bounded-domain vacuum formulation to
verify the predicted interior operator scaling.  Standard weak
Dirichlet and Marshak diffusion boundary conditions, which differ from
the vacuum-matched form analysed here, are studied computationally in
the companion work \cite{dsacomp}.

\printbibliography
\end{document}